\theoremstyle{plain}
\newtheorem{thm}{Theorem}
\newtheorem{cor}{Corollary}
\newtheorem{lem}{Lemma}
\newtheorem{prop}{Proposition}
\newtheorem{defn}{Definition}
\renewcommand{\thedefn}{\kern-3pt}
\theoremstyle{remark}
\newtheorem{rem}{Remark}
\begin{document}
\newcommand{\Real}{\mathbb{R}}
\newcommand{\eps}{\varepsilon}
\newcommand{\ye}{y_\varepsilon}
\newcommand{\yek}{y^\varepsilon_k}
\newcommand{\ue}{u_\varepsilon}
\newcommand{\ve}{v_\varepsilon}
\newcommand{\lme}{{\lambda^\varepsilon}}
\newcommand{\lmep}{{\lambda^\varepsilon_+}}
\newcommand{\lmem}{{\lambda^\varepsilon_-}}
\newcommand{\lmei}{\lambda^\varepsilon_i}
\newcommand{\mue}{\mu_\varepsilon}
\newcommand{\mued}{\mu_\varepsilon}
\newcommand{\muei}{\mu^\varepsilon_i}
\newcommand{\ome}{{\om_\varepsilon}}
\newcommand{\omed}{\omega^\varepsilon}
\newcommand{\lmen}{\lambda^\varepsilon_n}
\newcommand{\lmed}{\lambda_\varepsilon}
\newcommand{\eto}{\varepsilon\to 0}
\newcommand{\gme}{\gamma_\varepsilon}
\newcommand{\ddx}{\frac d {dx}}
\newcommand{\dxi}{\frac d {d\xi}}
\newcommand{\dfdx}[1]{\frac {d{#1}}{dx}}
\newcommand{\dfdxi}[1]{\displaystyle\frac {d{#1}}{d\xi}}
\newcommand{\ddxi}[1]{\frac {d^2{#1}}{d\xi^2}}
\renewcommand{\kappa}{\varkappa}
\newcommand{\e}{\varepsilon}
\newcommand{\Om}{\Omega}
\newcommand{\om}{\omega}
\newcommand{\lm}{\lambda}
\newcommand{\intl}{\int\limits}
\newcommand{\sml}{\sum\limits}
\newcommand{\D}{\displaystyle}
\newcommand{\tand}{\qquad\text{and}\qquad}
\newcommand{\uei}{u_{\e,\,i}}
\newcommand{\uek}{u_{\e,\,k}}
\newcommand{\Uek}{U_{\e,\,k}}
\newcommand{\omek}{\om_{\varepsilon,\,k}}
\newcommand{\omp}{\om_{\varepsilon,\,k}^\pm}
\newcommand{\uok}{u_{0,\,k}}
\newcommand{\omok}{\om_{0,\,k}}
\newcommand{\uo}{u_{0}}
\newcommand{\omo}{\om_{0}}
\newcommand{\uom}{u_-}
\newcommand{\muom}{\om^-}
\newcommand{\uop}{u_+}
\newcommand{\muop}{\om^+}
\newcommand{\Vek}{V_{\e,\,k}}
\newcommand{\bl}[1]{{\color{blue}{#1}}}
\newcommand{\mg}[1]{{\color{magenta}{#1}}}
\newcommand{\rd}[1]{{\color{red}{#1}}}
\newcommand{\marrem}[1]{\marginpar{\footnotesize\textsl{\color{black}#1}}}
\newcommand{\p}{\partial}
\newcommand{\Lo}{{\mathcal{L}_1}}
\newcommand{\Lt}{{\mathcal{L}_2}}
\newcommand{\push}{\hskip6pt}
\newcommand{\cL}{{\mathcal{L}}}
\newcommand{\cH}{{\mathcal{H}}}
\newcommand{\cA}{{\mathcal{A}}}
\newcommand{\cD}{{\mathcal{D}}}
\newcommand{\cR}{{\mathcal{R}}}
\newcommand{\cI}{{\mathcal{I}}}
\newcommand{\cW}{{\mathcal{W}}}
\newcommand{\cV}{{\mathcal{V}}}
\newcommand{\Lmen}{\Lambda_{\e,n}}
\newcommand{\Uen}{U_{\e,n}}
\newcommand{\Ven}{V_{\e,n}}
\newcommand{\betaen}{\beta_{\e,n}}
\newcommand{\hUen}{\hat{U}_{\e,n}}
\newcommand{\hlme}{\hat{\lm}^\e}
\newcommand{\PRef}{(\ref{eq1:ue})-(\ref{bc:ue})}
\newcommand{\pr}[1]{\|#1\|_1}
\newcommand{\pre}[1]{\|#1\|_{\cH_\e}}

\title[Asymptotic analysis of vibrating system]{Asymptotic analysis of vibrating system
containing  stiff-heavy and flexible-light parts
}
\authors{N. Babych\footnote[1]{University of Bath, United Kingdom, e-mail: \url{n.babych@bath.ac.uk }},
Yu. Golovaty\footnote[2]{Lviv National University, Ukraine, e-mail: \url{yu_holovaty@franko.lviv.ua}}
\date{}

\keywords{Spectral analysis, asymptotic analysis, stiff problem, eigenvalue}

\maketitle

\begin{abstract}
A model of strongly inhomogeneous  medium with simultaneous perturbation of rigidity and mass density
is studied. The medium has strongly contrasting physical characteristics in two parts
with the ratio of rigidities being proportional to a small parameter $\e$.
Additionally, the ratio of mass densities is of order $\e^{-1}$.
We investigate the asymptotic behaviour of spectrum and eigensubspaces as $\e\to 0$.
Complete asymptotic expansions of eigenvalues and eigenfunctions are
constructed and justified.

We show that the limit operator is  nonself-adjoint in general and possesses  two-dimensional Jordan cells
in spite of the singular perturbed problem is associated with a self-adjoint operator in
appropriated Hilbert space $\cL_\e$. This may happen if the metric in which the problem is self-adjoint
depends on small parameter $\e$ in a singular way.  In particular, it leads to a loss of completeness
for the eigenfunction collection. We describe how  root spaces of the limit operator approximate
eigenspaces of the perturbed operator.
\end{abstract}

\maketitle

\section*{Introduction}\noindent
We consider a model of strongly inhomogeneous medium consisting of two
nearly homogeneous components. Assuming a strong contrast of the corresponding
stiffness coefficients  ${k_1 \ll k_2}$, we get that their ratio $k_1 / k_2$ 
has a small order, which we denote by $\e$. In general, the mass densities $r_1$ and $r_2$
in two parts  could be quite different as well or could be the same.
We model this  assuming that the density ratio $r_1 / r_2$
is proportional to $\e^{-m}$. We investigate how the resonance vibrations of the medium change
if the parameter $\e$ tends to $0$. In the one-dimensional case we consider the spectral problem
\begin{gather*}
     \frac{d}{dx}\left(k_\e(x) \frac{d\ue}{dx}\right) + \lme\, r_\e(x)\ue = 0 \text{ \ in } (a,b), \quad
    \alpha_1\ue'(a)+\alpha_0\ue(a) =0,\quad \beta_1\ue'(b)+\beta_0\ue(b)=0,
\end{gather*}
where $(a,b)$ is an interval in $\mathbb{R}$ containing the origin and
\begin{equation}\label{CoeffKR}
    k_\e(x)=\begin{cases}
      \push k(x)& \text{for } x\in (a,0)\\
      \e\, \kappa(x) &\text{for }x\in (0,b),
    \end{cases}\qquad
 r_\e(x)=\begin{cases}
      \e^{-m} r(x)& \text{for }x\in (a,0)\\
      \kern16pt\rho(x)&\text{for }x\in (0,b).
    \end{cases}
\end{equation}
Here $k$, $r$ and $\kappa$, $\rho$ are smooth positive functions in intervals
$[a,0]$ and $[0,b]$ respectively. At  point $x=0$ of discontinuity of the coefficients we assume that
transmission conditions $\ue(-0) = \ue(+0)$, \  $(k\ue')(-0) = \e(\kappa\ue')(+0)$ hold.

Of course, the limit properties of spectrum depend on the power $m$ characterizing the density ratio.
Intuitively, we expect that for large values of $m$ the mass density perturbation has to be dominating
whereas for small $m$ the rigidity perturbation has to be leading.
Then it has to be at least one  critical point $m$
separating the cases. It appears to be truth exactly for $m=1$,
when the mass density perturbation is strictly inverse to the stiffness one.

This paper is devoted to the critical case $m=1$. We consider the Dirichlet problem
\begin{align}
&\left(k(x)\, \ue'\right)'\, + \e^{-1}\lme \,r(x)\,\ue = 0, \quad\qquad x\in(a,0),   \label{eq1:ue}\\
\e&\left(\kappa(x)\, \ue'\right)' +\phantom{\e^{-1}} \lme\,\rho(x)\,\ue = 0, \quad\qquad x\in(0,b),  \label{eq2:ue}\\
&\,\,\ue(-0) = \ue(+0),    \quad\,\, (k\ue')(-0) = \e\,(\kappa\ue')(+0),    \label{ic1:ue}\\
&\,\,\ue(a) =0,\qquad \ue(b)=0                \label{bc:ue}
\end{align}
and investigate the asymptotic behavior of eigenvalues $\lme$ and eigenfunctions $\ue$  as $\e\to 0$.

After a proper change of spectral parameter problem {\PRef} can be represented
as a problem with  perturbation of the transmission conditions only
(cf. the example with constant coefficients below). At first blush, the problem looks 
very simple. But the point is that the problem shows  a complicated picture of the eigenspace
bifurcation. In Section \ref{sec:Convergence} we prove  that the limit behavior of the spectrum
is described in terms of a nonself-adjoint operator that has in general  multiple eigenvalues and
two-dimensional root spaces. At the same time,   {\PRef} is associated with a self-adjoint operator in
the weighted space $\cL_\e$  with the following scalar product and  norm
\begin{equation}\label{ScalarProduct0}
    (\phi,\psi)_\eps=\e^{-1}(r \phi,\,\psi)_{L_2(a,0)}+(\rho \phi,\,\psi)_{L_2(0,b)},
    \qquad \| \phi \|_{\eps}=\sqrt{(\phi,\phi)_\eps}\,.
\end{equation}
It is obvious that for each fixed $\eps>0$ the spectrum of
(\ref{eq1:ue})-(\ref{bc:ue}) is real, discrete and simple,
$0< \lm^\e_1 < \lm^\e_2 <\dots < \lm^\e_j <\dots \to\infty$ as  $j\to\infty$ and
the corresponding  real-valued eigenfunctions $\{ u_{\e,j} \}_{j=1}^\infty$ form an orthogonal
basis in $\cL_\e$.
How  may it happen? The metric in $\cL_\e$ for which the perturbed problem is self-adjoint,
depends on small parameter $\e$ in a singular way.
In Sections \ref{sec: Asymptotic expansions}, \ref{sec: Bifurcation justification} we construct
and justify the complete asymptotic expansions of eigenvalues and eigenfunctions.
Therefore there exist pairs of closely adjacent eigenvalues $\lm^\e_j$ and $\lm^\e_{j+1}$
being the bifurcation of double limit eigenvalues. 
Although the corresponding eigenfunctions $u_{\e,j} $ and $u_{\e,j+1}$ 
remain orthogonal in $\cL_\e$ for all $\e>0$,
they make an infinitely small angle between them  in $L_2(a,b)$ with the standard metric and stick together
at the limit. In particular, it leads to the loss of completeness in $L_2(a,b)$
for the limit eigenfunction collection.
Nevertheless both $u_{\e,j} $ and $u_{\e,j+1}$ converge to the same limit,
a plane $\pi(\e)$ being the linear span of these eigenfunctions has regular
asymptotic behaviour as $\e\to 0$. In fact, a root space $\pi$ corresponding
the double eigenvalue  is the limit position of plane $\pi(\e)$
as $\e\to 0$, as is shown in Theorem \ref{TwoPlanes}.
We actually prove that  the completeness property of
the perturbed eigenfunction collection passes  into the completeness
of eigenfunctions and adjoined functions of the limit nonself-adjoint operator.

This work was motivated by \cite[Ch.8]{SanchezHubertSanchezPalencia},
where the similar problem for the Laplace operator has been considered.
The authors have handled the limit operator as the direct sum of two self-adjoint operators 
that nevertheless does not entirely explain the bifurcation picture in perturbation theory of operators.
The aim of this paper is to present more rigorous and detailed study of the case
in operator framework.

Finally, let us remark that the vibrating systems with singularly perturbed stiffness and mass density
have been considered in many papers. In the case of purely stiff models (with homogeneous
mass density), the asymptotic behavior of spectra
have been  studied in \cite{Panasenko1987} - \cite{Sanchez-Palencia1980}.
Referring to problems with purely density perturbation often
involving domain perturbations,
we mention \cite{BabychPreprynt}- \cite{Chechkin2006} 
with the latter including a broad literature overview in the area.
Spectral properties of vibrating systems with  mass entirely neglected in a subdomain were also
studied in \cite{Perez2003}, \cite{Perez2005}.
To the best of our knowledge, the first asymptotic results for the problems with simultaneous  
perturbations of mass density and stiffness appear in
\cite{GomezLoboNazarovPerez1}, \cite{GomezLoboNazarovPerez2}.

\section{Preliminaries}\label{sec2}
We demonstrate an example where eigenvalue bifurcation is calculated
explicitly. If all coefficients in \eqref{eq1:ue}, \eqref{eq2:ue} are constant we get
the eigenvalue problem
\begin{eqnarray}
&\ye'' +  \omega_\e^2\, \ye = 0, \quad x \in (a,0)\cup (0,b),&    \label{example: eq1}\\
&\ye(a) =0,\quad \ye(b) = 0, \quad
 \ye(-0) = \ye(+0), \quad \ye'(-0) = \e
\ye'(+0),&      \label{example: conditions}
\end{eqnarray}
where $\omega_\e^2=\e^{-1}\lme$.
Then each non-zero solution  can be represented by
\begin{equation*}    
\ye = \begin{cases}
A_\e \sin\ome(x-a) & \text{for }x\in (a,0),\\
B_\e \sin\ome(x-b)& \text{for }x\in (0,b),
\end{cases}
\end{equation*}
with $\ome > 0$ and $A_\e, B_\e \in \mathbb{R}$.
By virtue of \eqref{example: conditions} we have
$$
A_\e \sin\ome a - B_\e \sin\ome b =0 \qquad\text{and}\qquad
A_\e \cos\ome a - \e B_\e \cos\ome b =0.
$$
Looking for a non-zero solution of the algebraic system, we get the
characteristic equation
\begin{equation} \label{example: characteristic equation}
\cos\ome a \,\sin\ome b = \e \sin\ome a \,\cos\ome b.
\end{equation}

The latter easily gives existence of the limit $\ome \to \om$ as $\e
\to 0$ such that
\begin{equation}\label{LimitTrigEq}
    \cos\om a\, \sin\om b = 0.
\end{equation}
Moreover, the root $\om$ has to be positive.
Obviously, if we suppose, contrary to our claim, that  $\ome$ goes to $0$ as $\e\to 0$,
then \eqref{example: characteristic equation} can be written in the  equivalent form
$$
\frac{\cos\ome a \,\sin\ome b}{\cos\ome b \,\sin\ome a} = \e
$$
for sufficiently small $\e$.
A passage to the limit as $\e\to 0$ and $\ome\to 0$ leads to a contradiction,
because the left-hand side converges towards the negative number $b/a$.

If $a$ and $b$ are incommensurable number, then all roots of \eqref{LimitTrigEq} are simple.
In fact, multiple roots   exist iff
$2n|a|=(2l-1)b$ for certain natural $l$ and $n$.
Let us consider the case $a=-1$ and $b=2$.
Then the lowest positive  root $\om = \pi / 2$ of \eqref{LimitTrigEq} has multiplicity 2.
On the other hand, equation \eqref{example: characteristic equation} admits the factorization
$\left(\cos \ome-  \sqrt{\frac{\e}{2+2\e}}\right)
\left(\cos \ome+ \sqrt{\frac{\e}{2+2\e}}\right)\sin \ome =0$.
Hence the lowest eigenvalues
$\omega_{\e,1} = \frac{\pi}{2}-\arcsin\sqrt{\frac{\e}{2+2\e}}$,
$\omega_{\e,2} = \frac{\pi}{2}+\arcsin\sqrt{\frac{\e}{2+2\e}}$
are  closely adjacent and converge to the same limit $\pi/2$. The corresponding eigenfunctions $y_{\e,1}$ and $y_{\e,2}$
are defined up to a constant factor as
\begin{equation}\label{solution3Yeven:ue}
    y_{\e,j}(x)=
        \begin{cases}
            (-1)^j\sqrt{2\e/(1+\e)}  \sin\omega_{\e,j}(x+1)& \text{for } x\in(-1,0),    \\
            \phantom{(-1)^j\sqrt{2\e/(1+\e)}} \sin\omega_{\e,j}(x-2)& \text{for } x\in(0,2).
        \end{cases}
\end{equation}
We see at once that the angle in $L_2(-1,2)$  between the eigenfunctions $y_{\e,1}$ and $y_{\e,2}$ is  infinitely small as $\e\to 0$,
because  both eigenfunctions converge towards the same function
$$
y_*(x) = \begin{cases}
\kern30pt 0& \text{for }x\in (-1,0),\\
\sin\D\frac{\pi}{2} (x-2)& \text{for } x\in (0,2).
\end{cases}
$$

The point of the example is that the collection of eigenfunctions $\{ u_{\e,j} \}_{j=1}^\infty$
loses the completeness property  at the limit
on account of the double eigenvalues.
We now turn to  perturbed problem (\ref{eq1:ue})-(\ref{bc:ue}) in the general case.
To shorten formulas below, we introduce notation $I_a=(a,0)$, $I_b=(0,b)$ and
\begin{equation*}
    K(x)=\begin{cases}
      k(x)& \text{for } x\in I_a\\
      \kappa(x) &\text{for }x\in I_b,
    \end{cases}\qquad
 R(x)=\begin{cases}
      r(x)& \text{for }x\in I_a\\
      \rho(x)&\text{for }x\in I_b.
    \end{cases}
\end{equation*}
\begin{prop}    \label{prop: eigenvalues evaluation}
For each number $j\in\mathbb{N}$ eigenvalue $\lm^\e_j$  of {\PRef} is a continuous
function of $\e\in(0,1)$ and $c\,\e <\lm^\e_j\le C_j \,\e$
with constants $c$, $C _j$ being independent of $\e$.
\end{prop}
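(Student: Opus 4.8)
The plan is to characterise each $\lm^\e_j$ through the Rayleigh--Courant--Fischer min--max principle and to read both estimates off from a judicious choice of trial functions. First I would record the weak form of the problem: multiplying \eqref{eq1:ue} and \eqref{eq2:ue} by a test function $\psi$, integrating by parts over $I_a$ and $I_b$ respectively and adding, the transmission condition $(k\ue')(-0)=\e(\kappa\ue')(+0)$ makes the boundary terms at $x=0$ cancel, so that \PRef\ is the identity $s_\e[\ue,\psi]=\lm^\e(\ue,\psi)_\e$ on the $\e$-independent form domain $V=\{\phi\in H^1(a,b):\phi(a)=\phi(b)=0\}$, where $s_\e[\phi]=\int_{I_a}k|\phi'|^2\,dx+\e\int_{I_b}\kappa|\phi'|^2\,dx$ and $(\cdot,\cdot)_\e$ is the metric \eqref{ScalarProduct0}. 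Since the associated operator is self-adjoint with compact resolvent,
\[
\lm^\e_j=\min_{\dim W=j}\ \max_{\phi\in W\setminus\{0\}}\ \frac{s_\e[\phi]}{\|\phi\|_\e^2},\qquad W\subset V .
\]

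For the two-sided bound I would abbreviate $A_a[\phi]=\int_{I_a}k|\phi'|^2$, $A_b[\phi]=\int_{I_b}\kappa|\phi'|^2$, $M_a[\phi]=\int_{I_a}r|\phi|^2$, $M_b[\phi]=\int_{I_b}\rho|\phi|^2$, so that $s_\e[\phi]=A_a+\e A_b$ and $\|\phi\|_\e^2=\e^{-1}M_a+M_b$. The lower bound is where the structure is genuinely used: because $\phi(a)=0$ and $\phi(b)=0$, the one-sided Poincar\'e inequality on each of $I_a$ and $I_b$, together with positivity and boundedness of the coefficients, gives $A_a\ge c_1M_a$ and $A_b\ge c_2M_b$ with $c_1,c_2>0$ depending only on the coefficients and the endpoints. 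Hence with $c=\min(c_1,c_2)$ one has $s_\e[\phi]=A_a+\e A_b\ge c(M_a+\e M_b)=c\,\e\,\|\phi\|_\e^2$, so $\lm^\e_1\ge c\,\e$ and, by monotonicity of the min--max values (and a harmless decrease of $c$ to recover strictness), $c\,\e<\lm^\e_j$ for every $j$ with the same $c$. For the upper bound I would restrict the min--max to trial functions supported in $\overline{I_b}$ and vanishing at $0$ and $b$; these lie in $V$, and on them $A_a=M_a=0$, so $s_\e[\phi]/\|\phi\|_\e^2=\e\,A_b[\phi]/M_b[\phi]$, i.e.\ $\e$ times the Rayleigh quotient of the fixed Dirichlet problem $-(\kappa\phi')'=\nu\rho\phi$ on $I_b$. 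Taking $W$ to be the span of its first $j$ eigenfunctions gives $\lm^\e_j\le\nu_j\,\e=:C_j\,\e$, with $C_j$ the $j$-th such Dirichlet eigenvalue, manifestly independent of $\e$.

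Continuity in $\e$ is the step needing the most care, since the metric $\cL_\e$ itself degenerates as $\eto$, so one cannot merely fix the Hilbert space. I would keep the problem in the equivalent scaling $\tilde s_\e[\phi]=\e A_a+\e^2A_b$, $\tilde m_\e[\phi]=M_a+\e M_b$, whose ratio $\tilde s_\e/\tilde m_\e$ coincides with $s_\e/\|\cdot\|_\e^2$ and hence still computes $\lm^\e_j$ by min--max, but whose denominator no longer blows up. Fixing $\e_0\in(0,1)$ and writing each form as a weighting of the two fixed nonnegative forms, e.g.\ $\tilde s_\e=(\e/\e_0)(\e_0A_a)+(\e/\e_0)^2(\e_0^2A_b)$, yields the sandwiches $\alpha_-(\e)\,\tilde s_{\e_0}\le\tilde s_\e\le\alpha_+(\e)\,\tilde s_{\e_0}$ and $\beta_-(\e)\,\tilde m_{\e_0}\le\tilde m_\e\le\beta_+(\e)\,\tilde m_{\e_0}$ with $\alpha_\pm,\beta_\pm\to1$ as $\e\to\e_0$. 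Passing these through the min--max gives $\frac{\alpha_-}{\beta_+}\lm^{\e_0}_j\le\lm^\e_j\le\frac{\alpha_+}{\beta_-}\lm^{\e_0}_j$, whence $\lm^\e_j\to\lm^{\e_0}_j$ and continuity on all of $(0,1)$. The only real obstacle is bookkeeping: checking that the Poincar\'e constants are chosen uniformly so that $c$ is independent of both $j$ and $\e$ while $C_j$ is allowed to grow with $j$, and confirming that the degeneracy of $\|\cdot\|_\e$ near $\e=0$ is harmless at interior points $\e_0$, which the form-sandwich argument makes transparent.
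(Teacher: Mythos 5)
Your proposal is correct, and its skeleton --- the min--max characterization of $\lm^\e_j$ in the $\e$-dependent metric \eqref{ScalarProduct0} --- is the same as the paper's; in fact your upper bound coincides with the paper's exactly: both take the span of the first $j$ eigenfunctions of the Dirichlet problem $(\kappa v')'+\mu\rho v=0$ on $I_b$, extended by zero to $I_a$, and read off $\lm^\e_j\le\e\mu_j$. You diverge on the other two points, in both cases legitimately. For the lower bound the paper does not invoke Poincar\'e inequalities directly: it first reduces $\lm^\e_j>\lm^\e_1$, then bounds the Rayleigh quotient below by $\frac{k_*}{r_*}$ times the quotient with constant coefficients (where $k_*=\min K$, $r_*=\max R$), recognizes the resulting minimum as $\e\,\omega_{\e,1}^2$, the first eigenvalue of the explicitly solvable model problem \eqref{example: eq1}--\eqref{example: conditions} from Section \ref{sec2}, and uses the fact, established there via the characteristic equation \eqref{example: characteristic equation}, that $\omega_{\e,1}$ has a positive limit. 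Your route --- one-sided Poincar\'e on each of $I_a$ and $I_b$ using only $\phi(a)=0$ and $\phi(b)=0$, so that $A_a+\e A_b\ge c\,(M_a+\e M_b)=c\,\e\,\|\phi\|^2_\e$ --- is more elementary and self-contained: it does not lean on the Section \ref{sec2} computation and makes transparent that $c$ depends only on coefficient bounds and interval lengths, uniformly in $j$ and $\e$. Finally, for continuity the paper merely asserts that it ``follows immediately'' from \eqref{min-max}; your form-sandwich argument --- rescaling to $\tilde s_\e=\e A_a+\e^2 A_b$, $\tilde m_\e=M_a+\e M_b$ so the denominator no longer degenerates, then squeezing $\frac{\alpha_-}{\beta_+}\,\lm^{\e_0}_j\le\lm^\e_j\le\frac{\alpha_+}{\beta_-}\,\lm^{\e_0}_j$ with $\alpha_\pm,\beta_\pm\to1$ as $\e\to\e_0$ --- actually supplies the proof the paper leaves implicit, and correctly isolates the one delicate point, the singular $\e$-dependence of the metric at $\e=0$, showing it is harmless at interior $\e_0$. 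In exchange, the paper's comparison argument ties the general lower bound to the concrete bifurcation example it has just solved; yours buys independence from that computation at no loss of rigor.
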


\begin{proof}
The continuity of eigenvalues with respect to the small parameter follows immediately from
the mini-max principle
\begin{equation}    \label{min-max}
    \lm^\e_j = \min_{E_j}
    \max_{\begin{smallmatrix}
        v\in E_j\\ v\not=0
        \end{smallmatrix}}
    \frac{\int_a^0k v'^2\,dx+\e\int_0^b \kappa v'^2\,dx}{\e^{-1}\int_a^0 rv^2\,dx+\int_0^b \rho v^2\,dx},
\end{equation}
where the minimum is taken over all the subspaces $E_j\subset H^1_0(a,b)$ with $\dim E_j=j$.
We consider  the eigenfunctions $v_1,\dots, v_j$
corresponding to the lowest eigenvalues $\mu_1, \dots, \mu_j$ of the problem
\begin{equation}    \label{Dirichlet problem}
        (\kappa(x) v')'+\mu\rho(x) v=0,\quad x\in I_b,\qquad v(0)= v(b)=0.
\end{equation}
Extending each $v_k$ by zero to $(a,0)$ we get that the span $\mathcal{M}$ of $v_1,\dots, v_j$
is an $j$-dimensional subspace of $H^1_0(a,b)$.
 Then
\begin{equation}\label{min-max estimation 1}
    \lm^\e_j \le
        \max_{v\in \mathcal{M}}
\frac{\int_a^0k v'^2\,dx+\e\int_0^b \kappa v'^2\,dx}{\e^{-1}\int_a^0 rv^2\,dx+\int_0^b \rho v^2\,dx}=
        \max_{v\in \mathcal{M}}
        \frac{\e\int_0^b \kappa v'^2\,dx}{\int_0^b \rho v^2\,dx}=
        \e \mu_j,
\end{equation}
which establishes the upper estimate.
Next, by the same mini-max principle
\begin{equation*}    \label{min-maxForFirst}
 \begin{aligned}
    \lm^\e_j>\lm^\e_1 = \min_{H^1_0(a,b)}
    \frac{\int_a^0k v'^2\,dx+\e\int_0^b \kappa v'^2\,dx}{\e^{-1}\int_a^0 rv^2\,dx+\int_0^b \rho v^2\,dx}\geq
     \frac{k_*}{r_*}\min_{H^1_0(a,b)}
    \frac{\int_a^0 v'^2\,dx+\e\int_0^b  v'^2\,dx}{\e^{-1}\int_a^0 v^2\,dx+\int_0^b  v^2\,dx}=
    \frac{\e k_*\,\omega_{\e,1}^2}{r_*} \geq c \e,
\end{aligned}
\end{equation*}
where $k_*=\min_{x\in (a,b)} K(x)$, $r_*=\max_{x\in (a,b)} R(x)$ and $\omega_{\e,1}^2$ is the first eigenvalue of
problem \eqref{example: eq1}-\eqref{example: conditions} with constant coefficients.
It remains to note that $\omega_{\e,1}\to \pi/2$.
\end{proof}

\section{Convergence Results and Properties of Limit Problem }\label{sec:Convergence}
Let us consider the eigenvalue problem
\begin{equation}\label{LimitPrm=1}
   \begin{cases}
    \push (K(x) u')'+\mu R(x) u=0,\qquad x\in I_a\cup I_b,\\
        \push  u(a)=0,\quad u(b)=0,\quad
      u(-0)=u(+0),\quad u'(-0)=0,
   \end{cases}
\end{equation}
that will be referred to as \emph{the limit spectral problem}. The
spectrum of (\ref{LimitPrm=1}) is discrete and real (see Th. \ref{prop: limit spectrum}
below).  We introduce the space $\cH=\{f\in H^1_0(a,b)\colon f_a\in H^2(a,0)\text{ \ and \ } f_b\in H^2(0,b) \}$,
where $f_a$ and $f_b$ are the restrictions of $f$ to intervals $I_a$ and $I_b$
resp. Problem (\ref{LimitPrm=1}) admits the variational formulation: \emph{to find $\mu\in
\mathbb{C}$ and a nontrivial $u\in \cH$ such that}
\begin{equation}\label{VarLimitPrm1}
    \int_a^b K\,u'\phi'\,dx+\kappa(0)u'(+0)\phi(0)=
    \mu \int_a^b R\,u\phi\,dx
\end{equation}
\emph{for all $\phi\in C^\infty_0(a,b)$.}
We first prove a conditional results.
\begin{prop} \label{prop: existance of limits}
Given eigenvalue $\lambda^\e$ and the corresponding eigenfunction $u_\e$ of
(\ref{eq1:ue})-(\ref{bc:ue}), if $\e^{-1}\lm^{\e}\to \mu^*$ and $u_\e\to u_*$ in
$H^2$ weakly on each intervals $I_a$, $I_b$ and $u_*$ is different from zero,
then $\mu^*$ is an eigenvalue of (\ref{LimitPrm=1}) with the eigenfunction~$u_*$.
\end{prop}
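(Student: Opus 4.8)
The plan is to put both differential equations on a common footing and then pass to the limit. Writing $\mue=\e^{-1}\lme$, equation~(\ref{eq1:ue}) reads $(k u_\e')'+\mue\,r\,u_\e=0$ on $I_a$, while dividing~(\ref{eq2:ue}) by $\e$ turns it into $(\kappa u_\e')'+\mue\,\rho\,u_\e=0$ on $I_b$; in the notation $K,R$ both equations become the single relation $(K u_\e')'+\mue R u_\e=0$ on $I_a\cup I_b$. By hypothesis $\mue\to\mu^*$, so the whole argument reduces to passing to the limit in this equation together with the boundary, continuity and transmission conditions, and to checking that the limit $u_*$ lands in $\cH$ and solves~(\ref{LimitPrm=1}).

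First I would upgrade the mode of convergence. Since $u_\e\rightharpoonup u_*$ weakly in $H^2(I_a)$ and in $H^2(I_b)$, the sequence is bounded in each of these spaces, and the one-dimensional compact embedding $H^2(I)\hookrightarrow C^1(\overline I)$ forces $u_\e\to u_*$ strongly in $C^1(\overline{I_a})$ and $C^1(\overline{I_b})$ (any $C^1$ limit point must agree with the weak $H^2$ limit, so the full sequence converges). In particular $u_\e(a)\to u_*(a)$, $u_\e(b)\to u_*(b)$, and the one-sided values $u_\e(\pm0)$, $u_\e'(\pm0)$ converge to $u_*(\pm0)$, $u_*'(\pm0)$.

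Next I pass to the limit in the equation. For a test function $\psi\in C_0^\infty(I_a)$ write $\int_{I_a}(ku_\e')'\psi = \int_{I_a}k u_\e''\psi+\int_{I_a}k'u_\e'\psi$. Weak $H^2$ convergence gives $u_\e''\rightharpoonup u_*''$ in $L_2(I_a)$, hence the first integral tends to $\int_{I_a}k u_*''\psi$, while the strong $C^0$ convergence of $u_\e'$ handles the second; on the right-hand side $\mue r u_\e\to\mu^* r u_*$ uniformly. Thus $(k u_*')'+\mu^* r u_*=0$ on $I_a$, and the identical argument on $I_b$ yields $(\kappa u_*')'+\mu^*\rho u_*=0$. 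The boundary conditions $u_*(a)=u_*(b)=0$ and the continuity $u_*(-0)=u_*(+0)$ follow at once from the $C^1$ convergence.

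The decisive point is the transmission condition, which is where the scaling genuinely enters. Evaluating $(k u_\e')(-0)=\e(\kappa u_\e')(+0)$ and letting $\eto$, the right-hand side equals $\e\,\kappa(0)u_\e'(+0)\to0$ because $u_\e'(+0)$ stays bounded, whereas the left-hand side converges to $k(0)u_*'(-0)$. Since $k(0)>0$ this forces $u_*'(-0)=0$, which is exactly the asymmetric interface condition of the limit problem; this degeneration of the flux balance as $\e\to0$ is the crux of the statement and the only step that is not a routine limit passage. Collecting these facts, $u_*\in H^2(I_a)\cap H^2(I_b)$ is continuous at the origin and vanishes at $a,b$, hence $u_*\in\cH$; it is nontrivial by assumption and satisfies every relation in~(\ref{LimitPrm=1}), so $\mu^*$ is an eigenvalue with eigenfunction $u_*$.
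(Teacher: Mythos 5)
Your proof is correct, and it reaches the conclusion by a slightly different vehicle than the paper. The paper works with a single variational identity on all of $(a,b)$: after the substitution $\lme=\e\mu^\e$, each pair $(\mu^\e,u_\e)$ satisfies \eqref{VarPertubedPrm1}, in which the transmission condition \eqref{ic1:ue} has already been absorbed into the boundary term $(1-\e)\kappa(0)u_\e'(+0)\phi(0)$; the paper then passes to the limit in that one identity and lands on the variational formulation \eqref{VarLimitPrm1} of \eqref{LimitPrm=1}, so the interface condition $u'(-0)=0$ appears implicitly as a natural condition of the limit weak form. You instead pass to the limit in the strong (distributional) form of the equations interval by interval and verify each condition of \eqref{LimitPrm=1} pointwise, extracting $u_*'(-0)=0$ directly from $(ku_\e')(-0)=\e(\kappa u_\e')(+0)$ together with the boundedness of $u_\e'(+0)$. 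The analytic core is identical in both arguments --- the compactness upgrade from weak $H^2$ to strong $C^1$ on each closed subinterval, and the observation that the $\e$-scaled flux balance degenerates in the limit --- and indeed the paper also notes $u_\e'(-0)\to 0$ for exactly your reason. What your route buys is self-containedness: you never need to know (or check) that the weak formulation \eqref{VarLimitPrm1} is equivalent to the classical problem \eqref{LimitPrm=1}, a point the paper asserts without proof; every condition defining \eqref{LimitPrm=1} and the membership $u_*\in\cH$ is verified explicitly. What the paper's route buys is economy and consistency with the rest of the text, since the same identity \eqref{VarPertubedPrm1} is reused in the proof of Theorem \ref{thm: Convergence}, where the variational form is the natural tool for the a priori bound on $\int_a^b K u_\e'^2\,dx$.
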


\begin{proof}
We make a change of spectral parameter $\lm^{\e}=\e\mu^\e$ in (\ref{eq1:ue})-(\ref{bc:ue}), whereat
we can reduce equation (\ref{eq2:ue}) by the first order of $\e$. Then each pair $(\mu^\e, u_\e)$ satisfies
the integral identity
\begin{equation}\label{VarPertubedPrm1}
  \int_a^b K\,u_\e'\phi'\,dx+(1-\e)\kappa(0)u_\e'(+0)\phi(0)=
    \mu_\e \int_a^b R\,u_\e\phi\,dx
\end{equation}
for all $\phi\in C^\infty_0(a,b)$. The weak convergence of $u_\e$ in $H^2(0,b)$
gives the convergence $u_\e\to u_*$ in $C^1(0,b)$, in particular, $u_\e'(+0)\to
u_*'(+0)$ as well as $u_\e'(-0)\to 0$. Moreover, the limit function $u_*$ belongs to
$\cH$, since each $u_\e$ is a continuous function at $x=0$. A passage to the limit
in (\ref{VarPertubedPrm1}) implies that pair $(\mu^*, u_*)$ satisfies identity
\eqref{VarLimitPrm1}. Recall that $u_*$ is different from zero, which completes the
proof.
\end{proof}

Before improving  the convergent results, we first compute the spectrum of
the limit problem. Let us introduce space $\mathcal{L}= L_2(r,I_a)\oplus
L_2(\rho,I_b)$, where $L_2(g,I)$ is a weighted $L_2$-space with the norm
$\|v\|=\bigl(\int_I g|v|^2\bigr)^{1/2}$. We consider two operators
\begin{align*}\label{A1A2}
&\textstyle A_1=-\frac1r \frac{d}{dx} k \frac{d}{dx}\quad\text{in}\,\,L_2(r,I_a),\quad
    && \cD(A_1)= \bigl\{u\in H^2(I_a)\colon u(a)=0,\, u'(0)=0\bigr\}, \\
&\textstyle A_2=-\frac1\rho \frac{d}{dx} \kappa \frac{d}{dx}\quad\text{in}\,\,L_2(\rho,I_b),\quad
    && \cD(A_2)=\bigl\{u\in H^2(I_b)\colon u(b)=0\bigr\}.
\end{align*}
For problem (\ref{LimitPrm=1}) we assign the matrix operator
\begin{equation*}
  \mathcal{A}=  \begin{pmatrix}
      A_1 & 0\\
      0  & A_2
    \end{pmatrix}\quad\text{in}\,\,
\cL, \quad \cD(\cA)= \bigl\{(u_1,u_2)\in \cD(A_1)\oplus\cD(A_2) \colon  u_1(0) = u_2(0)\bigr\}.
\end{equation*}
The operator $\cA$ is nonself-adjoint. Actually, it is easy to check that
\begin{equation*}
  \mathcal{A}^*=  \begin{pmatrix}
      \hat{A}_1 & 0\\
      0  & \hat{A}_2
    \end{pmatrix},\quad \cD(\cA^*)= \bigl\{(v_1,v_2)\in \cD(\hat{A}_1)\oplus\cD(\hat{A}_2)
    \colon  (kv_1')(0) = (\kappa v_2')(0)\bigr\},
\end{equation*}
where $\hat{A}_1$ is the extension of operator $A_1$ to
$\cD(\hat{A}_1)=\bigl\{u\in H^2(a,0)\colon u(a)=0\bigr\}$
and $\hat{A}_2$ is the restriction of $A_2$ to $\cD(\hat{A}_2)=\bigl\{u\in \cD(A_2)\colon u(0)=0\bigr\}$.
Let $\sigma(A)$ and $\varrho(A)$ denote the spectrum and the resolvent set of an operator $A$ respectively.
Let $\cR_\mu(A)$ denote the resolvent
$(A-\mu\mathcal{I})^{-1}$ of an operator $A$, where
$\mathcal{I}$ is the identity operator in $\cL$.

\begin{defn}
  Let $u$ be an eigenvector of $\cA$ with eigenvalue $\mu$.
 A solution $u_*$ to $(\cA-\mu\cI)u_*=u$ is called an adjoined vector of $\cA$
 (corresponding to the eigenvalue $\mu$).
\end{defn}

\begin{thm}    \label{prop: limit spectrum}
(i) $\sigma(\cA)=\sigma (A_1) \cup \sigma ( \hat{A}_2).$

\noindent
(ii) If $\mu$ belongs to  $\sigma(\cA)\setminus
\bigl(\sigma (A_1) \cap \sigma ( \hat{A}_2)\bigr)$, then $\mu$ is a simple eigenvalue.
If $\mu\in \sigma (A_1) \cap \sigma ( \hat{A}_2)$, then
  $\mu$ has multiplicity  $2$ and
the corresponding root space is generated by an eigenvector and
an adjoined  vector of $\cA$.

\noindent
(iii) The set of eigenvectors and adjoined  vectors of $\cA$ forms a complete system in $\mathcal{L}$.
\end{thm}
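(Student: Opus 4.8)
The plan is to analyze the resolvent equation $(\cA-\mu\cI)(u_1,u_2)=(f_1,f_2)$ directly, exploiting that the two blocks are coupled only through the single transmission condition $u_1(0)=u_2(0)$ built into $\cD(\cA)$. Throughout I would use that $A_1$ is self-adjoint with compact resolvent, so its normalized eigenfunctions $\{\phi_n\}$ (eigenvalues $\nu_n$) form an orthonormal basis of $L_2(r,I_a)$, and that each $\phi_n$ satisfies $\phi_n(0)\ne0$ (otherwise $\phi_n(0)=\phi_n'(0)=0$ would force $\phi_n\equiv0$ by ODE uniqueness); likewise $\hat A_2$ is self-adjoint with orthonormal eigenbasis $\{\chi_m\}$ (eigenvalues $\theta_m$) of $L_2(\rho,I_b)$, each satisfying $\chi_m(0)=\chi_m(b)=0$. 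The whole argument is organized by whether $\mu$ lies in $\sigma(A_1)$, in $\sigma(\hat A_2)$, in both, or in neither.

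For part (i) and the simple-eigenvalue half of (ii) I would solve the two scalar problems separately. The first equation $(A_1-\mu)u_1=f_1$ with $u_1(a)=u_1'(0)=0$ is uniquely solvable exactly when $\mu\notin\sigma(A_1)$, fixing the number $u_1(0)$. The second equation carries only the single condition $u_2(b)=0$, so $A_2-\mu$ is surjective and its solution set is a one-parameter family $u_{2,p}+t\,s_\mu$, where $s_\mu$ spans $\ker(A_2-\mu)$; the key dichotomy is that $s_\mu(0)\ne0$ when $\mu\notin\sigma(\hat A_2)$ but $s_\mu=\chi_m$ and $s_\mu(0)=0$ when $\mu\in\sigma(\hat A_2)$. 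When $\mu\notin\sigma(A_1)\cup\sigma(\hat A_2)$ the free parameter $t$ can be tuned to enforce $u_2(0)=u_1(0)$, giving a unique solution, so $\mu\in\varrho(\cA)$; this yields $\sigma(\cA)\subseteq\sigma(A_1)\cup\sigma(\hat A_2)$. When $\mu$ lies in exactly one of the two spectra, a short computation shows $\cA-\mu\cI$ has a one-dimensional kernel but is not onto, so $\mu\in\sigma(\cA)$, giving the reverse inclusion. In each of these two cases the single eigenvector violates the range constraint of $\cA-\mu\cI$ (its $\phi_n$-part is nonzero in the $\sigma(A_1)$ case; in the $\sigma(\hat A_2)$ case the would-be preimage of $\chi_m$ would have to vanish at $0$ and hence solve $(\hat A_2-\mu)u=\chi_m$, impossible since $\chi_m\not\perp\chi_m$), so no adjoined vector exists and the eigenvalue is simple.

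For the double eigenvalues I would treat $\mu\in\sigma(A_1)\cap\sigma(\hat A_2)$. The homogeneous equations give $u_1=c\phi_n$ and $u_2=t\chi_m$, while the transmission condition reads $c\,\phi_n(0)=t\,\chi_m(0)=0$; since $\phi_n(0)\ne0$ this forces $c=0$, so the eigenspace is spanned by the single vector $(0,\chi_m)$ — the $A_1$-mode cannot be matched because its nonzero trace at $0$ has nothing to meet. To produce the adjoined vector I solve $(\cA-\mu\cI)(v_1,v_2)=(0,\chi_m)$: the first line forces $v_1=\alpha\phi_n$, the second is solvable since $A_2-\mu$ is onto, and any solution has $v_2(0)\ne0$ (were $v_2(0)=0$, then $v_2\in\cD(\hat A_2)$ and $(\hat A_2-\mu)v_2=\chi_m$, against the Fredholm condition for the self-adjoint $\hat A_2$). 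Matching $\alpha\,\phi_n(0)=v_2(0)$ then fixes $\alpha\ne0$, so exactly one adjoined vector exists; because $\alpha\ne0$, this adjoined vector itself lies outside the range of $\cA-\mu\cI$, so there is no third generalized vector and the algebraic multiplicity is precisely two, realized as a single Jordan cell, as claimed.

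Completeness (iii) I would derive constructively in two steps. First, $(0,\chi_m)$ is an eigenvector of $\cA$ for every $m$ (whether or not $\theta_m\in\sigma(A_1)$), and since $\{\chi_m\}$ is complete in $L_2(\rho,I_b)$ their closed span contains the whole second summand $\{0\}\oplus L_2(\rho,I_b)$. Second, for every $n$ there is a root vector whose first component is a nonzero multiple of $\phi_n$ — the eigenvector from the $\sigma(A_1)\setminus\sigma(\hat A_2)$ case, or, when $\nu_n\in\sigma(\hat A_2)$, the adjoined vector above with $\alpha\ne0$; subtracting its second component, already in the span by the first step, leaves $(\phi_n,0)$ in the closed span, and completeness of $\{\phi_n\}$ supplies the first summand $L_2(r,I_a)\oplus\{0\}$. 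Together these exhaust $\cL$. I expect the main obstacle to be the bookkeeping around the asymmetry of the two blocks — $A_1$ carries two boundary conditions while $A_2$ carries only one — and specifically the three non-vanishing facts ($\phi_n(0)\ne0$, surjectivity of $A_2-\mu$, and $v_2(0)\ne0$ for the adjoined vector) on which both the Jordan structure and the \emph{necessity} of the adjoined vectors rest; indeed eigenvectors alone are not complete once $\sigma(A_1)\cap\sigma(\hat A_2)\ne\varnothing$.
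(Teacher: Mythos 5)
Your proposal is correct, and for parts (i) and (ii) it is essentially the paper's own argument in different dress: where you solve the two blocks separately and tune the free constant in the one-parameter solution family of the $A_2$-equation, the paper packages exactly this step into the intertwining operator $T_\mu$ and writes the explicit triangular resolvent \eqref{ResolventA}, from which $\sigma(\cA)=\sigma(A_1)\cup\sigma(\hat{A}_2)$ \emph{and} boundedness of the inverse are immediate; in your version the bijectivity of $\cA-\mu\cI$ for $\mu\notin\sigma(A_1)\cup\sigma(\hat{A}_2)$ should be supplemented by a one-line appeal to closedness of $\cA$ (or to the explicit solution formula) before concluding $\mu\in\varrho(\cA)$ --- a cosmetic, not substantive, gap. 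Your Fredholm bookkeeping in (ii) matches the paper's: its solvability condition \eqref{AdjointVectorCond}, $\kappa(0)u_2'(0)v_1(0)=-\int_0^b\rho u_2^2\,dx$, is your trace-matching $\alpha\,\phi_n(0)=v_2(0)\neq0$ read in the opposite order (the paper imposes $v_2(0)=v_1(0)$ as an inhomogeneous Dirichlet datum and normalizes $v_1$; you use surjectivity of $A_2-\mu$ first and match traces after), and you in fact do \emph{more} than the paper: you verify that the Jordan chains terminate --- no adjoined vector at simple eigenvalues, and no third generalized vector at double ones since $\alpha\phi_n\perp\ker(A_1-\mu)$ fails --- which the paper asserts ("multiplicity $2$", "forms a basis in the root space") without spelling out. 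The genuine divergence is (iii): the paper disposes of completeness in one line by the Keldysh theorem cited from Gohberg--Krein, whereas you exploit the block-triangular structure constructively --- the eigenvectors $(0,\chi_m)$ already span $\{0\}\oplus L_2(\rho,I_b)$, and subtracting second components from the root vectors whose first component is a nonzero multiple of $\phi_n$ (the eigenvector when $\nu_n\notin\sigma(\hat{A}_2)$, the adjoined vector with $\alpha\neq0$ when $\nu_n\in\sigma(\hat{A}_2)$) recovers $L_2(r,I_a)\oplus\{0\}$. Your route is more elementary and self-contained (no Schatten-class or weak-perturbation hypotheses, and it makes transparent precisely why the adjoined vectors are indispensable for completeness), while the Keldysh argument is shorter and survives perturbations that destroy the triangular structure on which your construction rests.
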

\begin{proof} (i)
Let us consider the equation $(\cA -\mu\cI) u =f$ for fixed $f\in \cL$. In the coordinate representation we have
$A_1\, u_1-\mu u_1=f_1$, $ A_2\, u_2-\mu u_2=f_2$.
If $\mu\not\in \sigma(A_1)$, then $u_1=\cR_\mu(A_1)f_1$.
In order to find  $u_2$ we introduce the bounded intertwining
operator  $T_\mu\colon H^2(I_a)\to H^2(I_b)$ that solves the problem
$(\kappa \psi')'+\mu\rho \psi=0$ in $I_b$, $\psi(0) = g(0)$, $\psi(b)=0$
for each $g\in H^2(I_a)$.
Note that $T_\mu$ is  a well-defined operator for all $\mu\in \varrho(\hat{A}_2)$.
Then $u_2=T_\mu\cR_\mu(A_1)f_1+\cR_\mu(\hat{A}_2)f_2$ and the resolvent of $\cA$ can be written in the form
\begin{equation}\label{ResolventA}
  \cR_\mu(\cA)=  \begin{pmatrix}
       \phantom{S_\mu}\cR_\mu(A_1) & 0\\
      T_\mu\cR_\mu(A_1)  & \cR_\mu(\hat{A}_2)
    \end{pmatrix}.
\end{equation}
From the explicit representation  of $\cR_\mu(\cA)$ it follows that
sets $\sigma(\cA)$ and $\sigma (A_1) \cup \sigma (\hat{A}_2)$ coincide.

(ii)
We suppose that $\mu\in \sigma (A_1)\setminus\sigma (\hat{A}_2)$. Then there exists an eigenvector
$U_\mu=(u_1, T_\mu u_1)$, where $u_1$ is an eigenvector of $A_1$ and,
that is the same, one is  an eigenfunction of problem
$(k\phi')'+\mu r\phi=0$ in $I_a$, $\phi(a)=\phi'(0)=0$.
Note that $\mu$ is a simple eigenvalue of the problem.
Indeed, $(\cA-\mu \cI)U_\mu=0$ follows
from the evident equality $(A_2-\mu \cI)T_\mu=0$ for all $\mu\in \varrho(\hat{A}_2)$.

Suppose now that $\mu\in \sigma (\hat{A}_2)\setminus\sigma (A_1)$.
Then operator $\cA$ has the eigenvector $V_\mu=(0,u_2)$, where $u_2$ is an eigenvector of $\hat{A}_2$.
In other words, $u_2$ is an eigenfunction of the Dirichlet problem (\ref{Dirichlet problem}).
Note that each point of $\sigma (\hat{A}_2)$ is a simple eigenvalue.
Furthermore, the first component $u_1$ must be zero, since $\mu \not\in \sigma(A_1)$.

Finally we shall show that each point of intersection $\sigma (A_1)
\cap \sigma ( \hat{A}_2)$ is an eigenvalue of algebraic multiplicity
$2$. Obviously,  vector $V_\mu=(0,u_2)$,  which appears above, is an
eigenvector of $\cA$ in this case too. Next we consider the system
\begin{equation}\label{AdjSystem}
      A_1v_1-\mu\, v_1=0,\quad
      A_2v_2-\mu\, v_2=u_2
\end{equation}
determining adjoined  vectors. If $v_1=0$, then $v_2$ must be
a solution of the boundary value problem
$(\kappa \phi')'+\mu \rho \phi=-\rho u_2$ in $I_b$,  $\phi(0)=\phi(b)=0$,
which is unsolvable. Actually, since $\mu\in \sigma (\hat{A}_2)$, by the Fredholm alternative the problem admits a solution iff
$\int_0^b\rho |u_2|^2\,dx=0$. This contradicts the fact that $u_2$
is an eigenvector of $\hat{A}_2$. Consequently we have to assume that $v_1$ is an eigenvector of $A_1$
and examine the problem
$(\kappa v_2')'+\mu \rho v_2=-\rho u_2$ in $I_b$, $v_2(0)=v_1(0)$, $v_2(b)=0$.
Here the Fredholm alternative gives the solvability condition
\begin{equation}\label{AdjointVectorCond}
    \kappa(0)u_2'(0)v_1(0)=-\int_0^b\rho\, u_2^2\,dx.
\end{equation}
We satisfy one by normalization of $v_1$, because $u_2'(0)$ is different from zero.
This condition\label{pageVV}
assures the existence of  $v_2$  and
a solution $V_\mu^*=(v_1,v_2)$ of system (\ref{AdjSystem}).
Vector $V_\mu^*$ is the adjoined  vector of $\cA$. Pair
$\{V_\mu,V_\mu^*\}$ forms a basis in the root space that corresponds to
$\mu$.

The last statement of the theorem follows from the Keldysh theorem \cite{GohbergKrein}.
\end{proof}

We investigate the limit behaviour of eigenfunctions $u_{\e,n}$
normalized by conditions
\begin{equation}\label{NormalizationL2}
    \int_a^b R(x)\,u_{\e,j}^2(x)\,dx = 1,\qquad u_{\e,j}'(b)>0.
\end{equation}
Let us enumerate the eigenvalues of operator $\cA$ in increasing order
and repeat each eigenvalue according to its multiplicity:
$\mu_1\leq\mu_2\leq\cdots\leq\mu_j\leq\cdots$.
The next statement  improves the conditional results of Proposition \ref{prop: existance of limits}.
\begin{thm}\label{thm: Convergence}  There exists a one-to-one correspondence between  the set of eigenva\-lues $\{\lambda^\e_j\}_{j=1}^\infty$
of perturbed problem \eqref{eq1:ue}-\eqref{bc:ue} and the spectrum of operator $\cA$. Namely,
$\e^{-1}{\lambda^\e_j}\to \mu_j$ as  $\e\to 0$, for each $j\in \mathbb{N}$. Furthermore, a sequence of
the corresponding eigenfunctions $u_{\e,j}$ converges in $H^1(a,b)$ towards the eigenfunction $u$
with eigenvalue $\mu_j$.
\end{thm}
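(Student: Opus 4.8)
The plan is to upgrade Proposition~\ref{prop: existance of limits} into an unconditional statement by supplying the missing compactness, and then to fix the bijection and the multiplicities by a two–sided estimate of the Rayleigh quotients, refined by a count carried out at the two natural scales of the eigenfunctions. Throughout I write $\lambda^\e=\e\mu^\e$, so that $\mu^\e_j$ is governed by the quotient $\frac{\e^{-1}\int_a^0 kv'^2+\int_0^b\kappa v'^2}{\e^{-1}\int_a^0 rv^2+\int_0^b\rho v^2}$.

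First I would set up compactness. By Proposition~\ref{prop: eigenvalues evaluation} one has $c<\mu^\e_j\le C_j$ for each fixed $j$, so the rescaled eigenvalues stay in a compact set. Using the normalisation \eqref{NormalizationL2} together with the equations $(ku'_\e)'=-\mu^\e r u_\e$ on $I_a$ and $(\kappa u'_\e)'=-\mu^\e\rho u_\e$ on $I_b$, I would prove a uniform bound $\|u_{\e,j}\|_{H^2(I_a)}+\|u_{\e,j}\|_{H^2(I_b)}\le C_j$ from elliptic estimates on each subinterval, the one delicate point being that the interface value $u_\e(0)$ is absorbed by a trace–interpolation inequality. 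Weak $H^2$–compactness on each piece then gives strong $H^1$– and $C^0$–convergence along a subsequence; the normalisation passes to the limit, so the limit $u_*$ is nonzero and belongs to $\cH$, and Proposition~\ref{prop: existance of limits} applies. Thus every subsequential limit of $\mu^\e_j$ lies in $\sigma(\cA)$ and the eigenfunctions converge in $H^1(a,b)$ to an eigenfunction of $\cA$.

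For the upper bound $\limsup_\e\mu^\e_j\le\mu_j$ I would test the min–max with the first $j$ eigenfunctions of the decoupled operators $A_1$ and $\hat A_2$, extending each $\hat A_2$–eigenfunction by zero into $I_a$ and each $A_1$–eigenfunction continuously into $I_b$ at $O(1)$ cost. On the resulting $j$–dimensional space both quadratic forms become asymptotically diagonal, since the $A_1$–type functions carry $I_a$–energy amplified by $\e^{-1}$ that dominates every cross term with the $\hat A_2$–type functions; hence the quotient of any combination is a convex combination of the constituent eigenvalues up to $O(\e)$, and $\max\le\mu_j+o(1)$. The hard part is the matching lower bound, i.e. excluding extra clustering. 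Because each $\mu\in\sigma(\cA)$ has a one–dimensional eigenspace (Theorem~\ref{prop: limit spectrum}), all eigenfunctions converging to a fixed $\mu$ collapse onto one eigenvector, so counting independent limits is not enough. I would instead record the eigenfunctions at their natural scales, $O(1)$ on $I_b$ and $g_{\e,i}:=\e^{-1/2}u_{\e,i}|_{I_a}$ on $I_a$. The transmission law $(ku'_\e)(-0)=\e(\kappa u'_\e)(+0)$ forces $(kg'_{\e,i})(-0)=\e^{1/2}(\kappa u'_{\e,i})(+0)\to0$, so each limit $g^*_i$ solves $(kg')'+\mu rg=0$, $g(a)=0$, $g'(0)=0$: a one–dimensional space spanned by the $A_1$–eigenfunction $\phi$ when $\mu\in\sigma(A_1)$, and $\{0\}$ otherwise. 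Passing to the limit in the (exact) $\cL_\e$–orthogonality of the $u_{\e,i}$ turns it into orthogonality of the planar vectors $(c'_i,c_i)\in\Real^2$, formed by the amplitudes of $g^*_i$ along $\phi$ and of $u_{\e,i}|_{I_b}$ along the $\hat A_2$–eigenfunction, in the positive–definite form with weights $\|\phi\|_r^2$ and $\|u_2\|_\rho^2$. At most two such nonzero vectors can be mutually orthogonal, so at most two eigenvalues converge to a point of $\sigma(A_1)\cap\sigma(\hat A_2)$ and at most one otherwise — exactly the multiplicities of Theorem~\ref{prop: limit spectrum}. (For $\mu\in\sigma(A_1)\setminus\sigma(\hat A_2)$ the eigenfunctions do not collapse, and the same conclusion follows already from the leading $\e^{-1}$–term of the $I_a$–orthogonality.) Combined with the upper bound this yields $\mu^\e_j\to\mu_j$ for every $j$, and, the value being fixed, the normalisation (with $u'_{\e,j}(b)>0$) selects the limit eigenfunction uniquely.

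The principal obstacle I anticipate is justifying the rescaling $g_{\e,i}=\e^{-1/2}u_{\e,i}|_{I_a}$: the sharp rate $u_{\e,i}(0)=O(\sqrt\e)$ for the collapsing eigenfunctions is not delivered by the energy identity and must be drawn from the transmission relation near resonance — the balance $p_a(\mu^\e)=\e\,p_b(\mu^\e)$ between the boundary coefficients on $I_a$ and $I_b$, which is precisely what produces the $O(\sqrt\e)$ splitting of the double root in the constant–coefficient example. An alternative, cleaner but heavier, route is to read the whole statement as norm–resolvent convergence of the perturbed operators — considered on the fixed space $\cL$ — to the nonself–adjoint $\cA$, which would simultaneously give the eigenvalue convergence and the convergence of the rank–two spectral projections recorded in Theorem~\ref{TwoPlanes}.
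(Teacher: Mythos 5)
Your skeleton --- (a) uniform $H^2$ bounds on each subinterval feeding Proposition \ref{prop: existance of limits}, so that every limit point of $\e^{-1}\lambda^\e_j$ lies in $\sigma(\cA)$; (b) a min--max test-space computation giving $\limsup_{\e\to0}\e^{-1}\lambda^\e_j\le\mu_j$; (c) a multiplicity count forbidding excess clustering --- is a genuinely different route from the paper, which never touches Rayleigh quotients: it realizes the perturbed problem as an operator $\cA_\e$ on the \emph{fixed} space $\cL$, solves $(\cA_\e-\mu\cI)u=f$ explicitly via the bounded intertwining operators $T_\mu$, $S_\mu$, factorizes the resolvent as in \eqref{preRes}, concludes $\cR_\mu(\cA_\e)\to\cR_\mu(\cA)$ in the uniform operator topology, and gets the number-by-number convergence with correct multiplicities in one stroke from the Gohberg--Krein theorem; eigenfunction convergence then comes from Titchmarsh's family $\psi(x,\nu)$, analytic in $\nu$. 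Your closing remark correctly identifies this as the ``cleaner but heavier'' alternative --- it is in fact the paper's proof --- but you leave it as a one-line gesture, so your submission must stand on route (a)--(c). There it has a genuine gap, at exactly the step you flag yourself: at a double point $\mu\in\sigma(A_1)\cap\sigma(\hat{A}_2)$ your planar vectors $(c_i',c_i)\in\Real^2$ exist only if $g_{\e,i}=\e^{-1/2}u_{\e,i}|_{I_a}$ is bounded, i.e.\ only if $u_{\e,i}=O(\sqrt\e)$ on $I_a$, and no ingredient you have established delivers this rate. This is not a removable technicality within your setup: without it the limit amplitudes need not exist, the ``at most two mutually orthogonal vectors in $\Real^2$'' count never gets off the ground, and the suggested repair via the balance $p_a(\mu^\e)=\e\,p_b(\mu^\e)$ amounts to redoing the bifurcation analysis of Section \ref{subsec 42} --- heavier than the theorem you are proving.

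The gap can, however, be closed without any rate, by the same one-dimensionality device the paper uses on $I_b$, applied on $I_a$. Let $\chi(\cdot,\nu)$ solve $(k\chi')'+\nu r\chi=0$ on $I_a$ with $\chi(a)=0$, $\chi'(a)=1$; then $u_{\e,i}|_{I_a}=c_{\e,i}\,\chi(\cdot,\mu^\e_i)$, and $\chi(\cdot,\mu^\e_i)\to\chi(\cdot,\mu)$ in $C^1$. If three eigenvalues $\mu^\e_{i_1},\mu^\e_{i_2},\mu^\e_{i_3}$ converged to a double $\mu$, all three eigenfunctions would converge (with normalization \eqref{NormalizationL2}) to the same $(0,u_2)$, so for each pair $l\neq m$ the $\cL_\e$-orthogonality reads $\e^{-1}c_{\e,l}c_{\e,m}\bigl(\|\chi(\cdot,\mu)\|^2_{L_2(r,I_a)}+o(1)\bigr)=-\int_0^b\rho\,u_{\e,l}u_{\e,m}\,dx\to-\|u_2\|^2_{L_2(\rho,I_b)}<0$, forcing $c_{\e,l}c_{\e,m}<0$ for all three pairs; but the product of the three pairwise products equals $(c_{\e,1}c_{\e,2}c_{\e,3})^2\ge0$ --- a contradiction, with no boundedness of $\e^{-1/2}c_{\e,i}$ ever needed. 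For $\mu\in\sigma(\hat{A}_2)\setminus\sigma(A_1)$ the same device justifies your ``$\{0\}$ otherwise'' claim: $\chi'(0,\mu)\neq0$ (else $\mu\in\sigma(A_1)$), while the transmission condition \eqref{ic1:ue} gives $c_\e(k\chi')(0,\mu^\e)=\e(\kappa u_\e')(+0)=O(\e)$, hence $c_\e=O(\e)$ and two collapsing eigenfunctions would force $\|u_2\|^2_{L_2(\rho,I_b)}=0$; your $\e^{-1}$-leading-term argument settles $\mu\in\sigma(A_1)\setminus\sigma(\hat{A}_2)$. With this substitute for your unproven rescaling, your steps (a)--(c) plus the standard induction on $j$ do prove the theorem, by a more elementary argument than the paper's; what the paper's operator-theoretic route buys in exchange is the uniform resolvent convergence itself, which makes the multiplicity bookkeeping automatic and aligns with the later root-space analysis of Theorem \ref{TwoPlanes}.
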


\begin{proof}
For the perturbed problem {\PRef}  we assign the matrix operator in $\cL$
\begin{equation*}
\begin{aligned}
  \mathcal{A}_\e=  \begin{pmatrix}
      \hat{A}_1 & 0\\
      0  & A_2
    \end{pmatrix}, \quad \cD(\cA_\e)= \bigl\{(u_1,u_2)\in&{\ } \cD(\hat{A}_1)\oplus\cD(A_2)
    \colon \\ &u_1(0) = u_2(0),\quad (ku_1')(0) = \e(\kappa u_2')(0)\bigr\}.
\end{aligned}
\end{equation*}
Clearly, if $\mu_\e$ belongs to $\sigma(\cA_\e)$, then $\e\mu_\e$ is an eigenvalue of {\PRef}.
Let us solve the equation $(\cA_\e-\mu \cI)u=f$ for $f=(f_1,f_2)\in \cL$ and $\mu\in \varrho(\cA_\e)$.
Similarly to the previous theorem we obtain $u_1=\cR_\mu(A_1)f_1+\e S_\mu u_2$,
 $u_2=T_\mu u_1+\cR_\mu(\hat{A}_2)f_2$, where $S_\mu\colon H^2(I_b)\to H^2(I_a)$ is
 a bounded intertwining operator that solves the problem
$(k \psi')'+\mu r \psi=0$ in $I_a$, $\psi(a)=0$ and $(k\psi')(0) = (\kappa g')(0)$
for each $g\in H^2(I_b)$. This yields that
\begin{equation}\label{preRes}
    \begin{pmatrix}
      I & -\e S_\mu\\
      -T_\mu  & I
    \end{pmatrix} \begin{pmatrix} u_1\\ u_2 \end{pmatrix}=
    \begin{pmatrix}
      \cR_\mu(A_1)f_1 \\
      \cR_\mu(\hat{A}_2)f_2
    \end{pmatrix},
\end{equation}
where the matrix operator in the left-hand side is invertible as a small perturbation of the invertible one.
Letting $\e\to 0$  we can assert that
\begin{equation*}
    \cR_\mu(\cA_\e)=\begin{pmatrix}
      I & -\e S_\mu\\
      -T_\mu  & I
    \end{pmatrix}^{-1}
    \begin{pmatrix}
      \cR_\mu(A_1) & 0\\
      0 & \cR_\mu(\hat{A}_2)
    \end{pmatrix}\rightarrow
    \begin{pmatrix}
      I & 0\\
      T_\mu  & I
    \end{pmatrix}
    \begin{pmatrix}
      \cR_\mu(A_1) & 0\\
      0 & \cR_\mu(\hat{A}_2)
    \end{pmatrix}.
\end{equation*}
Hence, $\cR_\mu(\cA_\e)\to \cR_\mu(\cA)$ in the uniform operator topology as $\e\to 0$,
which establishes a number-by-number convergence of the corresponding eigenvalues \cite[Th. 3.1]{GohbergKrein}.

Next we prove existence of the limit for the eigenfunctions
under normalization condition \eqref{NormalizationL2}.
 We conclude from (\ref{VarPertubedPrm1}) that
$ 
  \int_a^b K(x)u_\e'^2(x)\,dx+(1-\e)\kappa(0)u_\e'(+0)u_\e(+0)=\mu_\e.
$ 
For each $\nu$ there exists a twice differentiable solution $\psi(x,\nu)$ of
equation $\left(\kappa v'\right)' + \nu\rho\,v = 0$ in $I_b$
that satisfies conditions $v(b)=0$, $v'(b)=1$. Moreover,
$\psi(x,\nu)$ is an analytic function with respect to the second
argument for each fixed $x$ \cite[Th.1.5]{Titchmarsh}. In particular,
$\psi(x,\mu^\e)\to \psi(x,\mu)$ in $C^2(0,b)$ as $\mu^\e\to\mu$.
Then there exits constant $\beta_\eps$ such that
$u_{\e}(x)=\beta_{\eps}\psi(x,\mu^{\e})$.
Moreover, $\beta_\eps$ is bounded as $\e\to 0$, which is due to
condition (\ref{NormalizationL2}). Therefore the values
$u_\e(+0)$ and $u_\e'(+0)$ are bounded with respect to $\e$. Consequently we have
$ 
  \int_a^b K(x)u_\e'^2(x)\,dx\leq\mu_\e+(1-\e)\kappa(0)|u_\e'(+0)u_\e(+0)|\leq M.
$ 
Then finally the sequence $\{u_\e\}_{\e>0}$ is precompact in the weak topology of
$H^1(a,b)$.  Let us consider a subsequence $u_{\e'}$ such that $u_{\e'}\to u$ in
$H^1(a,b)$ weakly. We get
$u_{\e'}(x)=\beta_{\eps'}\psi(x,\mu^{\e'})\to\beta\psi(x,\mu)=u(x)$ in $C^2(0,b)$
for certain $\beta$. Note that $\beta>0$, which is due to \eqref{NormalizationL2}. Moreover,
$u'_{\e'}(+0)\to u'(+0)$ as $\e'\to 0$. A passage to the limit in
(\ref{VarPertubedPrm1}) implies that partial weak limit $u$ satisfies the identity
\begin{equation*}\label{VarLimitPrmm}
    \int_a^b K(x)u'\phi'\,dx+\kappa(0)u'(+0)\phi(0)=
    \mu \int_a^b R(x)u\phi\,dx
\end{equation*}
for all $\phi\in C^\infty_0(a,b)$. Moreover, $u$ is different from zero, since
$\int_a^b R|u|^2 \,dx=1$.
Consequently  each weakly convergent subsequence of
$\{u_\e\}_{\e>0}$ tends to $u$, where $u$ is an eigenfunction of
(\ref{LimitPrm=1}) that corresponds to the eigenvalue $\mu$ and
satisfies conditions $\|u\|_{L^2(R,(a,b))}=1$ and $u'(b)>0$. Then
the same conclusion can be drawn for the entire sequence.
\end{proof}
\begin{rem}\label{remDontDepend}
In some cases value $\e^{-1}\lambda^\e$ doesn't actually depend on $\e$. The latter takes place if and
only if the three-points problem
\begin{equation}\label{Three Point Problem}
    \push (K(x) u')'+\mu R(x) u=0 \quad\text{for}\quad x\in   I_a\cup I_b,\qquad
        \push  u(a)= u(b)= u'(-0)=u'(+0)=0
\end{equation}
has an  eigenfunction $u$ that is \emph{continuous at} $x=0$
(for a certain eigenvalue $\mu$).
This situation is possible, for instance, in the case $a=-b$ when there exists
even eigenfunction of the Dirichlet problem on $(-b,b)$.
Then a trivial verification shows that $\lambda^\e=\e\mu$ is an eigenvalue of {\PRef}
with the eigenfunction $\ue=u$ for all $\e\in(0,1]$.

\end{rem}

\begin{cor}\label{CorH2Convergence}
Restrictions of eigenfunction $u_{\e,j}$ to the intervals $I_a$ and $I_b$
converge towards the corresponding restrictions of eigenfunction $u$
 in $H^2(a,0)$ and $H^2(0,b)$ respectively.
\end{cor}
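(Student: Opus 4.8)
The plan is to upgrade the $H^1(a,b)$ convergence $u_{\e,j}\to u$ already supplied by Theorem \ref{thm: Convergence} (where also $\mu^\e:=\e^{-1}\lm^\e_j\to\mu:=\mu_j$) to $H^2$ convergence on each subinterval, by bootstrapping through the differential equations; below I abbreviate $u_\e=u_{\e,j}$. On $I_b$ there is in fact nothing left to do: in the proof of Theorem \ref{thm: Convergence} the eigenfunction was written as $u_\e=\beta_\e\,\psi(\,\cdot\,,\mu^\e)$ and shown to converge to $\beta\,\psi(\,\cdot\,,\mu)=u$ in $C^2(0,b)$, which a fortiori gives convergence in $H^2(0,b)$.

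It thus remains to treat $I_a$, where I would exploit the equation directly. After the change of parameter $\lm^\e=\e\mu^\e$, equation \eqref{eq1:ue} reads $(k u_\e')'+\mu^\e r\,u_\e=0$ on $(a,0)$, and since $k$ is smooth and bounded away from zero this can be solved for the second derivative,
\begin{equation*}
  u_\e''=-\frac1k\bigl(k'u_\e'+\mu^\e r\,u_\e\bigr)\qquad\text{in }I_a.
\end{equation*}
The limit $u$, being an eigenfunction of \eqref{LimitPrm=1}, satisfies $(ku')'+\mu r\,u=0$ on $(a,0)$, so that likewise $u''=-\frac1k(k'u'+\mu r\,u)$. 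Subtracting these two identities I obtain
\begin{equation*}
  u_\e''-u''=-\frac1k\Bigl(k'(u_\e'-u')+\mu^\e r\,(u_\e-u)+(\mu^\e-\mu)\,r\,u\Bigr).
\end{equation*}

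Next I would estimate the right-hand side in $L_2(a,0)$. Because the restriction map $H^1(a,b)\to H^1(a,0)$ is continuous, the $H^1(a,b)$ convergence of $u_\e$ yields $u_\e\to u$ and $u_\e'\to u'$ in $L_2(a,0)$; since $1/k$, $k'$, $r$ are bounded on $[a,0]$ and $\mu^\e\to\mu$, every term on the right-hand side then tends to zero in $L_2(a,0)$. Hence $u_\e''\to u''$ in $L_2(a,0)$, and combined with the $H^1(a,0)$ convergence this gives $u_{\e,j}\to u$ in $H^2(a,0)$, completing the argument.

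This is a one-dimensional regularity bootstrap, so I do not anticipate any serious obstacle beyond careful bookkeeping. The one point that genuinely must be checked is that the limit $u$ solves the same ODE with parameter $\mu$ on $I_a$ — which is exactly the content of $u$ being an eigenfunction of the limit problem \eqref{LimitPrm=1} — and that the $H^1(a,b)$ convergence transfers to $L_2(a,0)$ convergence of both $u_\e$ and $u_\e'$; both are immediate, so the mild subtlety here is purely organizational rather than analytical.
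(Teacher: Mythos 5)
Your proposal is correct and follows essentially the same route as the paper: on $I_a$ the paper likewise solves the equation for the second derivative, writing $u_\e'' = -k'k^{-1}u_\e' - \mu^\e r k^{-1}u_\e$, passes to the limit in $L_2(a,0)$ using the $H^1(a,b)$ convergence of Theorem \ref{thm: Convergence}, and identifies the limit as $u''$ via the limit problem \eqref{LimitPrm=1}. The only (harmless) deviation is on $I_b$, where the paper repeats the same ODE bootstrap while you instead invoke the $C^2(0,b)$ convergence $u_\e=\beta_\e\psi(\cdot,\mu^\e)\to\beta\psi(\cdot,\mu)=u$ already established inside the proof of Theorem \ref{thm: Convergence} — both are valid.
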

\begin{proof}
Set $\ue=u_{\e,j}$.
We consider equation \eqref{eq1:ue}  in the form
$ 
\ue'' = -k'k^{-1} \ue' - \mue r k^{-1} \ue    
$ 
in $I_a$.
Then from Theorem \ref{thm: Convergence} we have
\begin{equation}\label{convergence of second derivatives}
\ue'' \to -k' k^{-1} u' - \mu r k^{-1} u \quad\text{in}\quad L^2(a,0),
\end{equation}
where $u$ is an eigenfunction of \eqref{LimitPrm=1}.
From \eqref{LimitPrm=1} it follows that the limit \eqref{convergence of second derivatives}
is exactly the second derivative of the limiting eigenfunction in $I_a$.
The proof for interval $I_b$ is the same.
\end{proof}


\section{Formal Asymptotic Expansions of Eigenvalues and Eigenfunctions}
\label{sec: Asymptotic expansions}

\subsection{Asymptotics of Simple Eigenvalues}
In this section we construct the complete asymptotic expansions of eigenvalues $\lm^\e$ and
eigenfunctions $u_\e$.
We begin with the examination of eigenvalues $\lm^\e_j$ for which  the limit
$\mu = \D\lim_{\e\to 0} \lm^\e_j / \e$ is a simple eigenvalue of operator $\cA$.
Clearly, $\mu$ depends on $j$, which we do not indicate for the sake of notation simplicity.
The asymptotic expansions of the eigenvalues and the corresponding eigenfunctions are represented by
\begin{gather}
    \lme  \sim  \e \,(\mu + \e \nu_1 + \cdots + \e^n \nu_n + \cdots),    \label{NCexpansionL}\\
    \ue(x)\sim
\begin{cases} \label{NCexpansionU}
y_0(x) + \e y_1(x) + \cdots + \e^n y_n(x) + \cdots & \text{for } x \in I_a,\\
z_0(x) + \e z_1(x) + \cdots + \e^n z_n(x) + \cdots & \text{for } x\in I_b,
\end{cases}
\end{gather}
where $\mu$ is an arbitrary eigenvalue of limit problem (\ref{LimitPrm=1}). Then
\begin{equation}\label{SimpleU0}
 u(x)=\begin{cases}
  y_0(x) & \text{for } x \in I_a,\\
  z_0(x) & \text{for } x\in I_b
\end{cases}
\end{equation}
is the corresponding eigenfunction of (\ref{LimitPrm=1}) as it follows
from Th. \ref{thm: Convergence}. Since in this section we treat only the simple
eigenvalues $\mu$, according to Th. \ref{prop: limit spectrum} we only consider here
two possible situations: $\mu \in \sigma (A_1) \backslash \sigma ( \hat{A}_2)$
and $\mu \in \sigma ( \hat{A}_2) \backslash \sigma (A_1)$.


\subsubsection{Case $\mu \in \sigma (A_1) \backslash \sigma ( \hat{A}_2)$}\label{subsec 411}
\label{subsec: Case mu0 in spectrum of A1}
We fix the corresponding eigenfunction $y_0$ of  ope\-rator $A_1$ such that
$
\intl_a^0 r y_0^2 \,dx=1
$
and $y_0(0) > 0$. Since $\mu$ doesn't belong to the spectrum of $\hat{A}_2$ there exists
a unique solution $z_0$ to the problem
\begin{eqnarray}\label{NC z_0}
    (\kappa z_0')' + \mu \rho z_0 = 0\quad \text{ in \ } I_b, \qquad
    z_0(0) = y_0(0),\quad z_0(b) = 0.
\end{eqnarray}
An easy computation shows that the next terms of the expansions are unique solutions to
the recurrent sequence of problems
\begin{align}
&\begin{cases}\label{SimpleProbYn}
     (k y_n')' + \mu r y_n = -\nu_n r y_0- r \sml_{j=1}^{n-1} \nu_j \, y_{n-j} \quad \text{ in \ } I_a, \\
     y_n(a) = 0,\quad (k y_n')(0) = (\kappa z_{n-1}')(0),\quad
    \int_a^0 r y_n y_0 \, dx = 0,
\end{cases}\\
&\begin{cases}\label{SimpleProbZn}
     (\kappa z_n')' + \mu \rho z_n = - \rho \sml_{j=1}^n \nu_j \, z_{n-j} \quad  \text{in \ } I_b, \\
     z_n(0) = y_n(0),\quad z_n(b) = 0
\end{cases}
\end{align}
with $\nu_n = - (\kappa z_{n-1}')(0) y_0(0)$ for $n = 1,2,\dots \,\,$.
The last formula for $\nu_n$ is obtained as the solvability condition of \eqref{SimpleProbYn}.
Note that all solutions $y_n$, $z_n$ are smooth functions.
\begin{rem}\label{remz'(0)=0}
It might happened that $z_0'(0)=0$ (cf. the proof of Th. \ref{thm: Convergence}).
In this case function $u$ defined by \eqref{SimpleU0} is exactly an eigenfunction
of the perturbed problem for each $\e\in (0,1]$.
Then the construction of asymptotics  is interrupted and we can state that there exists an eigenvalue
  $\lme=\e \mu$ for all $\e>0$. The corresponding eigenfunction
  \begin{equation*}\label{StableUe}
 u_\e(x)=\begin{cases}
  y_0(x) & \text{for } x \in I_a,\\
  z_0(x) & \text{for } x\in I_b
\end{cases}
\end{equation*}
doesn't depend on $\e$.
\end{rem}


\subsubsection{Case $\mu \in \sigma ( \hat{A}_2) \backslash \sigma (A_1)$}\label{subsec 412}
\label{subsec: Case mu0 in spectrum of A2}
This situation immediately implies $y_0 = 0$ (cf. the proof of Th. \ref{prop: limit spectrum}, part (ii)).
We fix the corresponding eigenfunction $z_0$ of $\hat{A}_2$
such that
$
\intl_0^b \rho z_0^2 \,dx=1
$
and $z'_0(0) > 0$.
A trivial verification shows that the next terms of expansions (\ref{NCexpansionU})
are the unique smooth solutions to the problems
\begin{align}
&\begin{cases}\nonumber
    (k y_n')' + \mu r y_n = - r \sml_{j=1}^{n-1} \nu_j \, y_{n-j}\quad \text{ in \ } I_a, \\
    y_n(a) = 0,\quad (k y_n')(0) = (\kappa z_{n-1}')(0),
\end{cases}
\\
&\begin{cases}\label{SimpleProbZZn}
     (\kappa z_n')' + \mu \rho z_n = - \nu_n\rho z_0 - \rho \sml_{j=1}^{n-1} \nu_j \, z_{n-j} \quad \text{ in \ } I_b, \\
     z_n(0) = y_n(0),\quad z_n(b) = 0,\quad  \int_0^b \rho z_n z_0 \, dx = 0,
\end{cases}
\end{align}
with $\nu_n = - (\kappa z'_0)(0) y_n(0)$ for $n = 1,2,\dots \,\,$. Such choice of $\nu_n$ assures the solvability of
 \eqref{SimpleProbZZn}.



\subsection{Asymptotics of Double Eigenvalues}\label{sec: Asymptotic expansions. The critical cases}\label{subsec 42}
In this subsection we treat  the case when for two  successive
eigenvalues $\lm^\e_j$ and $\lm^\e_{j+1}$ the corresponding ratios
$\e^{-1}\lm^\e_j$ and $\e^{-1}\lm^\e_{j+1}$ converge to the same
limit $\mu$. It is obvious that $\mu$ must belong to the intersection $\sigma(A_1) \cup
\sigma(\hat{A}_2)$. Let us assume that the eigenvalues and the
corresponding eigenfunctions admit expansions
\begin{gather}
    \lme  \sim  \e \,(\mu+ \sqrt{\e} \nu_1 + \e \nu_2 + \cdots),    \label{CC expansion:lme}\\
    \ue(x)\sim
\begin{cases} \label{CC expansion:ue}
\phantom{v_0(x)+} \sqrt{\e}\,w_1(x) + \e\, w_2(x) + \cdots & \text{for } x\in(a,0),\\
  v_0(x) + \sqrt{\e}\,v_1(x) + \e\, v_2(x) + \cdots & \text{for } x\in(0,b),
\end{cases}
\end{gather}
because the eigenvectors of operator $\cA$ that correspond to double eigenvalues $\mu$ have the form
 $V_\mu=(0,v_0)$ (see Th. \ref{prop: limit spectrum}).
Substituting (\ref{CC expansion:lme}), (\ref{CC expansion:ue})
into the perturbed problem we obtain
\begin{align}
&
\label{CC v0}
  \push(\kappa v_0')' + \mu \rho v_0 = 0 \quad\text{in}\quad I_b,  \qquad
  \push v_0(0) =  v_0(b)=0,
\\ &
\label{CC w0}
  \push (k w_1')' + \mu r w_1 = 0 \quad\text{in}\quad I_a, \qquad
  \push  w_1(a)= w_1'(0) = 0.
\end{align}
We fix $\mu \in \sigma(A_1) \cup \sigma(\hat{A}_2)$ and introduce the functions
\begin{equation}\label{UUstar}
   U(x)= \begin{cases}
     0&\text{for } x\in I_a\\
    v(x)& \text{for }x\in I_b
    \end{cases},
    \qquad
   U_*(x)= \begin{cases}
     w_*(x)& \text{for }x\in I_a\\
    v_*(x)& \text{for }x\in I_b
    \end{cases}
\end{equation}
that correspond to the eigenvector and adjoined  vector of $\cA$
(cf. vectors $V_\mu$ and $V_\mu^*$ in Th. \ref{prop: limit spectrum}).
Here $v$ is an eigenfunction of (\ref{CC v0}) such that $\int_0^b \rho v^2\,dx=1$, $v'(0)>0$
and adjoined vector $U_*$ is chosen such that $(U, U_*)_{L_2(R,(a,b))}=0$.
We also introduce an  eigenfunction $w$ of (\ref{CC w0}) such that
$\int_a^0 r w^2\,dx=1$ and $w(0)>0$.
It follows that  $v_0=\alpha v$ and $w_1=\beta w$ with certain  constants $\alpha$ and $\beta$.
 In addition,  $\alpha$ must be different from zero. The next problems to solve are
\begin{align}
&
\label{CC v1}
 \push  (\kappa v_1')' + \mu  \rho v_1 = -\nu_1 \alpha\rho v \quad\text{in}\quad I_b, \qquad
  \push   v_1(0) = \beta w(0),\quad v_1(b)=0,
\\ &
\label{CC w1}
\push     (k w_2')' + \mu  r w_2 = -\nu_1\beta r w  \quad\text{in}\quad I_a, \qquad
\push     w_2(a)=0, \quad  k(0) w_2'(0) = \alpha\kappa(0) v'(0).
\end{align}
In general case both problems \eqref{CC v1} and \eqref{CC w1}   are
unsolvable, since $\mu $ belongs to the spectra $\sigma(A_1)$ and
$\sigma(\hat{A}_2)$ at one time. Hence we have to apply Fredholm's alternative for both the problems.
After multiplying equations \eqref{CC w1} and \eqref{CC v1} by eigenfunctions $v$ and $w$
respectively and integrating by parts, one yields the common
solvability condition:
\begin{equation}\label{CommonSolvCond}
    \left(
      \begin{array}{cc}
        0 & \omega \\
        \omega & 0
      \end{array}
    \right)\left(
             \begin{array}{c}
               \alpha \\
               \beta
             \end{array}
           \right)=-\nu_1\left(
             \begin{array}{c}
               \alpha \\
               \beta
             \end{array}
           \right),
\end{equation}
where $\omega=(\kappa wv')(0)$ is positive.
Since the first component of vector $\gamma = (\alpha,\beta)$ must
be different from zero, $-\nu_1$ is an
eigenvalue of the matrix in \eqref{CommonSolvCond}. Therefore if
either $\nu_1=\omega$ and $\gamma = (1,-1)$ or $\nu_1=-\omega$ and
$\gamma = (1,1)$, then problems \eqref{CC v1}, \eqref{CC w1} admit
solutions. Moreover,  functions $\nu_1 w_*$ and $\nu_1 v_*$ solve problems
\eqref{CC w0} and \eqref{CC v1} respectively for both values of $\nu_1$.
Actually these problems imply immediately
$(\cA - \mu) U_* = \om U$. In other words, the first corrector
is an adjoined  vector of $\cA$ that corresponds to the eigenvector $ \om U$.
It  causes no confusion that we use the same letters $U$, $U_*$ to designate
a function of $L_2(a,b)$ and a vector in $\cL$.

Summarizing, we  formally demonstrate that there exists a pair of
closely adjacent eigenvalues $\lm^\e_j$ and $\lm^\e_{j+1}$ that
admit the asymptotic expansions
\begin{equation*}
\lm^\e_j = \e \mu  - \e^{3/2} \omega + O(\e^2),\qquad \lm^\e_{j+1}
= \e \mu  + \e^{3/2} \omega + O(\e^2), \quad \text{as } \e\to 0.
\end{equation*}
As of asymptotics of eigenfunctions we have
\begin{equation*}
    u_{\e,j}(x)=U(x)-\sqrt{\e}\,\omega U_*(x)+O(\e),\qquad
    u_{\e,j+1}(x)=U(x)+\sqrt{\e}\,\omega U_*(x)+O(\e).
\end{equation*}
These eigenfunctions subtend an infinitely small angle in
$L^2$-space as $\e\to 0$. Hence $u_{\e,j}$ and $u_{\e,j+1}$ stick
together at the limit. The latter gives rise to the loss of completeness of
the limit eigenfunction system.

Suppose that $\nu_1 = \omega$ and $\gamma = (1,-1)$. Then we will
denote by $V_1$ and $W_2$ such solutions of the problems that
$\int_0^b \rho V_1 v\,dx=0$  and  $\int_a^0 r W_2 w\,dx=0$.
We see at once that $-V_1$ and $-W_2$ are solutions of
\eqref{CC v1}, \eqref{CC w1} for $\nu_1=-\omega$ and $\gamma =
(1,1)$.

From now on we distinct two branches of expansions \eqref{CC expansion:lme}
\begin{eqnarray}
\begin{array}{rcl}\label{CC expansion:lme pm}
   \lmem & \sim & \e(\mu - \sqrt{\e}\om + \e \nu_2^- + \dots + \e^{n/2}\nu_n^- + \dots), \\
    \lmep & \sim & \e(\mu + \sqrt{\e}\om + \e \nu_2^+ + \dots + \e^{n/2}\nu_n^+ + \dots),
\end{array}
\end{eqnarray}
and the corresponding branches of \eqref{CC expansion:ue} are
\begin{eqnarray}
    \ue^\pm(x)&\sim&\left\{
    \begin{array}{ll}
        \qquad\, \mp \sqrt{\e}\,w(x) \pm \,\e \,w_2^\pm(x)+ \dots + \e^{n/2}w_n^\pm(x) \dots,
        & x \in I_a,\phantom{\int\frac NN}\\
       v(x) \pm \sqrt{\e}\,v_1^\pm(x)  + \e v_2^\pm(x) + \dots + \e^{n/2}v_n^\pm(x) \dots, & x \in I_b.
    \end{array}
        \right.
        \label{CC expansion:ue pm}
\end{eqnarray}
All coefficients are endowed with indexes $+$ or $-$ if
they depend on the choice of the sign of the first corrector $\nu_1= \pm \om$.
Note that the high order correctors in \eqref{CC expansion:lme pm}, \eqref{CC expansion:ue pm}
have to be calculated separately for both the branches. We now turn to the case $\nu_1=\om$ and find coefficients
$\nu_n^+$, $w_n^+$ and $v_n^+$. To shorten notation, we  omit  upper index "$+$" {} for a while.
Next, we see that problems \eqref{CC v1}  and \eqref{CC w1} admit  many solutions
$v_1  = V_1  + \alpha_1  v$ and $w_2  = W_2  + \beta_1  w$,
where $\alpha_1$, $\beta_1$ are constants.
These constants can be obtained from the  consistency of
problems
\begin{align}
&\begin{cases}  \label{CC v2}
   (\kappa v_2 ')' + \mu \rho v_2  =
       - \nu_1  \rho \left(  V_1  + \alpha_1  v \right) - \nu_2  \rho v , \quad x\in I_b   \\
   v_2 (0) =  W_2(0) + \beta_1 w(0),\quad  v_2 (b)=0,
\end{cases}
\\ &\begin{cases}  \label{CC w3}
    (k w_3 ')' + \mu r w_3  =
        -\nu_1  r \left(  W_2 + \beta_1 w \right)   - \nu_2  r w_1  , \quad x\in I_a \\
    w_3 (a)=0, \quad  k(0) w_3 '(0) = \kappa(0) \left(  V_1  + \alpha_1  v \right)'(0).
\end{cases}
\end{align}
The solvability conditions for problems (\ref{CC v2}) and (\ref{CC
w3}), which arrive from Fredholm's alternatives, can be represented
as a linear algebraic system
\begin{equation}\label{CommonSolvCond2}
    \left(
      \begin{array}{cc}
        \nu_1  & \omega \\
        \omega & \nu_1
      \end{array}
    \right)\left(
             \begin{array}{c}
               \alpha_1  \\
               \beta_1
             \end{array}
           \right)= \left(
             \begin{array}{c}
               (\kappa   W_2 v')(0) + \nu_2  \\
               (\kappa w  V_1 )'(0) - \nu_2
             \end{array}
           \right).
\end{equation}
The system has solution if and only if $\nu_2 =\frac{1}{2}\left(\kappa w V_1\phantom{}' -\kappa W_2 v'\right)(0)$.
After the solvability condition is satisfied, system \eqref{CommonSolvCond2} has
a partial solution $\alpha_1 = \beta_1 = \frac{1}{2\om} \left(\kappa w V_1' + \kappa W_2 v'\right)(0)$
and problems \eqref{CC v2} and \eqref{CC w3} admit  solutions $ V_2 $ and $ W_3 $ such that
$\intl_0^b \rho  V_2  v \,dx = 0$ and $\intl_a^0 r  W_3  w \,dx = 0$.
Therefore, all other solutions of \eqref{CC v2} and \eqref{CC w3}
allow the representation $v_2  =  V_2  + \alpha_2  v$ and $w_3  =  W_3  + \beta_2  w$
with real constants $\alpha_2 $, $\beta_2$.

We construct the general terms of expansions \eqref{CC expansion:lme
pm} and \eqref{CC expansion:ue pm} as solutions to the problems
\begin{align}
&\begin{cases}\label{CC vn}
   (\kappa v_n ')' + \mu \rho v_n  = -\rho \sml_{j=1}^{n} \nu_j   v_{n-j} ,
   \quad x\in I_b,\\
    v_n (0) = w_{n} (0),\quad  v_n (b)=0,
\end{cases}
\\ &
\begin{cases}\label{CC wn}
    (k w_{n+1}')' + \mu r w_{n+1}  = -
    r \sml_{j=1}^{n}\nu_j  w_{n+1-j} , \quad x\in I_a, \\
    w_{n+1} (a)=0, \quad   (k w_{n+1} )'(0) = (\kappa v_{n-1} )'(0),
\end{cases}
\intertext{with}   \label{Vn-1 and Wn}
&v_{n-1}  =  V_{n-1}  + \alpha_{n-1}  v \tand w_n  =  W_n  + \beta_{n-1}  w,
\end{align}
where $ V_{n-1} $ and $ W_n $ are solutions of the previous
problems chosen accordingly to the orthogonality conditions
$\intl_0^b \rho  V_{n-1} v \, dx = 0$ and $\intl_a^0 r  W_n  w \,dx = 0$, $n \ge 2$.
Constants $\alpha_{n-1}$ and $\beta_{n-1}$ we find from the
solvability conditions for \eqref{CC vn} and \eqref{CC wn} given by
\begin{equation}\label{CommonSolvCond n}
\kern-15pt\left(
      \begin{array}{cc}
        \nu_1  & \omega \\
        \omega & \nu_1
      \end{array}
    \right)\left(
             \begin{array}{c}
               \alpha_{n-1}  \\
               \beta_{n-1}
             \end{array}
           \right)= \left(
             \begin{array}{c}
               \left(\kappa  W_n v'\right)(0) + \sml_{j=2}^{n-1} \nu_j  \alpha_{n-j} +\nu_n \\
               \left(\kappa w V_{n-1}'\right)(0) + \sml_{j=2}^{n-1} \nu_j  \beta_{n+1-j}-\nu_n
             \end{array}
           \right).
\end{equation}
The latter has a solution if and only if $\nu_n = \frac12  \left( \kappa w  V'_{n-1}- \kappa  W_n v'\right)(0)$.
Then system \eqref{CommonSolvCond n} has a partial solution
$\alpha_{n-1} = \beta_{n-1} = \frac{1}{2\om}
\left(\kappa w  V_{n-1}' + \kappa W_n v'\right)(0) + \frac1\om \sml_{j=2}^{n-1} \nu_j \alpha_{n-j}$.
Substituting the constants into \eqref{Vn-1 and Wn} we finish the general step of  recurrent algorithm.
Hence, after coming back our natation we obtain all coefficients  $\nu_n^+$,  $v_n^+$ and  $w_n^+$ of
series \eqref{CC expansion:lme pm} and \eqref{CC expansion:ue pm}.

Similarly, we can construct the coefficients $\nu_n^-$, $v_n^-$ and $w_n^-$
of series \eqref{CC expansion:lme pm} and \eqref{CC expansion:ue pm}.
Then, by induction we get that for any natural $n$ the coefficients satisfy
relations $\nu_{n}^- =(-1)^n \,\nu_{n}^+$, $v_{n}^- = (-1)^n\, v_{n}^+$ and $w_{n}^- = (-1)^n\, w_{n}^+$.

\section{Justification of Asymptotic Expansions}\label{sec: Bifurcation justification}

Let $\cL_\e$ be he weighted $L_2$-space  with the scalar product and  norm given by \eqref{ScalarProduct0}.
We also introduce space $\cH_\e$ as the Sobolev space $H^1_0(a,b)$ with scalar product and  norm
\begin{equation}\label{ScalarProduct1}
    \langle\phi,\psi\rangle_\eps=\int_a^0 k \phi'\,\psi' \,dx+\e\int_0^b \kappa \phi'\,\psi' \,dx,
    \qquad \pre{\phi}=\sqrt{\langle\phi,\phi\rangle_\eps}.
\end{equation}
It is easily seen that
\begin{equation}\label{NormsEqui}
  c \| \phi \| \leq\| \phi \|_{\eps}\leq C\e^{-1/2} \| \phi \|, \qquad
  c \e^{1/2}\pr{\phi}  \leq \pre{\phi }\leq C \pr{\phi},
\end{equation}
where $\|\cdot\|$ and $\pr{\cdot}$ are standard norms in $L_2(a,b)$ and $H_0^1(a,b)$ respectively.

For the sake of completeness, we introduce here below the classical result on quasimodes.
Let $A$ be a self-adjoint operator in Hilbert space $H$ with domain $\mathcal{D}(A)$ and
$\sigma>0$.
\begin{defn}
We will say that pair $(\mu, u)\in \mathbb{R}\times\mathcal{D}(A)$ is a quasimode
with accuracy to $\sigma$ for operator $A$ if $\|(A-\mu I)u\|_H\leq \sigma$ and $\|u\|_H=1$.
\end{defn}
\begin{lem}[Vishik and Lyusternik]\label{LemmaVishik}
  Suppose that the spectrum of $A$ is discrete. If $(\mu, u)$ is a
quasimode of $A$ with accuracy to $\sigma$, then interval
$[\mu-\sigma,\mu+\sigma]$ contains an eigenvalue of $A$.
Furthermore, if segment $[\mu-d,\mu+d]$, $d>0$, contains one and
only one eigenvalue $\lambda$ of $A$, then $\|u-v\|_H\leq
2d^{-1}\sigma$, where $v$ is an eigenfunction of $A$ with eigenvalue
$\lambda$, $\|v\|_H=1$. \cite{VishykLiusternyk, Lazutkin}
\end{lem}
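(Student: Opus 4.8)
The plan is to derive both parts of the lemma directly from the spectral decomposition of the self-adjoint operator $A$. Since the spectrum is discrete, I fix an orthonormal basis $\{e_j\}$ of $H$ consisting of eigenvectors, $Ae_j=\lambda_je_j$, and expand the quasimode as $u=\sum_j c_je_j$, so that $\sum_j|c_j|^2=\|u\|_H^2=1$. Everything rests on the single identity
\[
\|(A-\mu I)u\|_H^2=\sum_j|\lambda_j-\mu|^2\,|c_j|^2 .
\]

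For the first assertion I argue by contradiction. Suppose the closed interval $[\mu-\sigma,\mu+\sigma]$ contains no eigenvalue; then $|\lambda_j-\mu|>\sigma$ for every $j$, and the identity above gives $\|(A-\mu I)u\|_H^2>\sigma^2\sum_j|c_j|^2=\sigma^2$, contradicting the quasimode bound $\|(A-\mu I)u\|_H\le\sigma$. This is nothing but the resolvent estimate $\mathrm{dist}\bigl(\mu,\sigma(A)\bigr)=\|(A-\mu I)^{-1}\|^{-1}$ for self-adjoint operators, but the eigenbasis computation keeps the argument self-contained.

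For the second assertion, let $\lambda$ be the unique eigenvalue of $A$ in $[\mu-d,\mu+d]$ and let $P$ be the orthogonal projection onto its eigenspace. Every eigenvalue $\lambda_j\ne\lambda$ lies outside the segment, hence $|\lambda_j-\mu|>d$; restricting the sum to these indices yields
\[
d^2\,\|(I-P)u\|_H^2\le\sum_{\lambda_j\ne\lambda}|\lambda_j-\mu|^2\,|c_j|^2\le\sigma^2 ,
\]
so $\|(I-P)u\|_H\le\sigma/d$. I may assume $\sigma<d$, since otherwise $2d^{-1}\sigma\ge2\ge\|u-v\|_H$ holds trivially for any unit vector $v$; this guarantees $\|Pu\|_H^2=1-\|(I-P)u\|_H^2>0$. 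Setting $v=Pu/\|Pu\|_H$, an eigenfunction of $A$ with eigenvalue $\lambda$ and $\|v\|_H=1$, and abbreviating $s=\|(I-P)u\|_H$, I split
\[
\|u-v\|_H\le\|(I-P)u\|_H+\bigl|\,\|Pu\|_H-1\,\bigr|=s+\bigl(1-\sqrt{1-s^2}\bigr).
\]
The elementary inequality $1-\sqrt{1-s^2}\le s^2\le s$ (valid for $0\le s\le1$) then gives $\|u-v\|_H\le 2s\le 2d^{-1}\sigma$, which is the required estimate.

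The argument is short and carries no conceptual difficulty, the result being classical \cite{VishykLiusternyk, Lazutkin}. The only point requiring care is the final step: one must normalize the projection $Pu$ rather than use it as is, dispose separately of the degenerate case $\sigma\ge d$, and invoke the bound $1-\sqrt{1-s^2}\le s$ to combine the two contributions into the stated constant $2d^{-1}\sigma$.
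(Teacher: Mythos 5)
Your proof is correct. The paper itself gives no argument for this lemma---it is stated as a classical result and simply cited to Vishik--Lyusternik and Lazutkin---so there is no internal proof to compare against; your spectral-decomposition argument is exactly the standard one found in those references. The key identity $\|(A-\mu I)u\|_H^2=\sum_j|\lambda_j-\mu|^2|c_j|^2$ is legitimate for $u\in\mathcal{D}(A)$, and the expansion in an orthonormal eigenbasis is justified because discreteness of the spectrum (isolated eigenvalues of finite multiplicity, as for the Sturm--Liouville operators $A_\e$ in this paper) gives completeness of the eigenvectors. Two small points you handle correctly but could state explicitly: in the first part, the strict inequality for the sum follows because at least one $c_{j_0}\neq 0$ contributes a strictly larger term (pointwise strictness alone does not pass to an infinite sum without this remark); and in the second part, your normalization $v=Pu/\|Pu\|_H$, the separate disposal of the degenerate case $\sigma\ge d$ (where $2d^{-1}\sigma\ge 2\ge\|u-v\|_H$ trivially), and the bound $1-\sqrt{1-s^2}\le s^2\le s$ are precisely the care needed to land on the constant $2d^{-1}\sigma$ rather than a weaker one. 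Nothing is missing.
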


\subsection{Simple Spectrum}
We will denote by $\Lambda_{\e, n} = \e \,(\mu + \e \nu_1 + \cdots + \e^n \nu_n)$ and
\begin{gather*}
   U_{\e, n}(x)=
\begin{cases}
y_0(x) + \e y_1(x) + \cdots + \e^n y_n(x)  & \text{for } x \in I_a\\
z_0(x) + \e z_1(x) + \cdots + \e^n z_n(x) & \text{for } x\in I_b
\end{cases}
\end{gather*}
the partial sums of series  \eqref{NCexpansionL}, \eqref{NCexpansionU}.
 The perturbed problem is associated with self-adjoint operator
$ A_\e = - \frac{1}{r_\e} \frac{d}{dx} k_\e \frac{d}{dx}$ in $\cL_\e$ with
the domain
$\cD(A_\e) = \{ f \in \cH\colon (kf')(-0) = \e (\kappa f')(+0)\}$, where coefficients $k_\e$, $r_\e$ are given by
\eqref{CoeffKR} for $m=1$.


\begin{thm}\label{ThSimple1}
If $\mu_j\in \sigma (A_1) \backslash \sigma ( \hat{A}_2)$, then  eigenfunction $u_{\e,j}$
of  \eqref{eq1:ue}-\eqref{bc:ue} with eigenvalue $\lambda_j^\e$ converges in $H^1(a,b)$
towards the function
  \begin{equation*}\label{Case1LimitFunction}
 u(x)=\begin{cases}
  y(x) & \text{for } x \in I_a\\
  z(x) & \text{for } x\in I_b,
\end{cases}
\end{equation*}
where $y$ is an eigenfunction of the problem  $(ky')'+\mu ry=0$ in $I_a$,
$y(a)=y'(0)=0$
with eigenvalue $\mu_j$, and $z$ is a unique solution of the problem
$(\kappa z')'+\mu_j\, \rho z=0$ in $I_b$, $z(0)=y(0)$, $z(b)=0$.

If $z'(0)=0$, then $\lambda_j^\e=\e \mu_j$ and $u_{\e,j}=u$ for all $\e>0$. Otherwise
$\lambda^\e_j$ and $u_{\e,j}$ admit asymptotics expansions \eqref{NCexpansionL}, \eqref{NCexpansionU}
obtained in \ref{subsec 411} for $\mu=\mu_j$. Moreover, the estimates of remainder terms hold
\begin{gather}\label{EstNLambda}
    \left|\e^{-1}\lambda^\e_j-(\mu_j+\e \nu_1+\cdots+\e^n \nu_n)\right|\leq c_n \e^{n+1},\\
    \|u_{\e,j}- \vartheta_\e U_{\e, n}\|_{H^1(a,b)}\leq C_n \e^{n+1},\label{EstNU}
\end{gather}
where $\vartheta_\e$ is a normalizing multiplier with
strictly positive limit as $\e\to 0$.
\end{thm}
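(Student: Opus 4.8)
The plan is to prove Theorem~\ref{ThSimple1} via the Vishik--Lyusternik lemma (Lemma~\ref{LemmaVishik}) applied to the self-adjoint operator $A_\e$ in the weighted space $\cL_\e$. First I would dispose of the degenerate case: if $z'(0)=0$, then by the construction in \S\ref{subsec 411} (cf.\ Remark~\ref{remz'(0)=0}) the function $u$ solves the transmission problem exactly for every $\e$, since the condition $(k u')(-0)=\e(\kappa u')(+0)$ reduces to $0=\e\kappa(0)z'(0)=0$; thus $\lambda^\e_j=\e\mu_j$ and $u_{\e,j}=u$ identically, and the asymptotic series terminates. The convergence $u_{\e,j}\to u$ in $H^1(a,b)$ in the generic case follows already from Theorem~\ref{thm: Convergence} together with the identification of the limit eigenfunction in \S\ref{subsec: Case mu0 in spectrum of A1}; so the real content is the quantitative estimates \eqref{EstNLambda} and \eqref{EstNU}.

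For the quasimode construction, the key step is to verify that the partial sum $U_{\e,n}$ is an approximate eigenfunction of $A_\e$ with eigenvalue $\Lambda_{\e,n}$. I would compute the residual $(A_\e-\Lambda_{\e,n}\cI)U_{\e,n}$ by substituting the partial sums directly into the equations \eqref{eq1:ue}--\eqref{eq2:ue} and the transmission condition \eqref{ic1:ue}. Because the correctors $y_k$, $z_k$, $\nu_k$ were defined precisely to annihilate the coefficients of $\e^0,\dots,\e^n$ in the recurrent problems \eqref{SimpleProbYn}, \eqref{SimpleProbZn} and the solvability relation $\nu_n=-(\kappa z_{n-1}')(0)y_0(0)$, all lower-order terms cancel and the residual is $O(\e^{n+1})$ in the natural norms, with an analogous bound on the transmission-condition discrepancy. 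The normalized pair $\bigl(\e^{-1}\Lambda_{\e,n},\,\Uen/\|\Uen\|_\e\bigr)$ is then a quasimode with accuracy $c_n\e^{n+1}$ for the operator $\e^{-1}A_\e$, and Lemma~\ref{LemmaVishik} places an eigenvalue $\e^{-1}\lambda^\e$ within $c_n\e^{n+1}$ of $\Lambda_{\e,n}/\e$, giving \eqref{EstNLambda}. Here I must be careful that the estimate is carried out in the $\|\cdot\|_\e$ metric; the equivalences \eqref{NormsEqui} allow me to transfer between $\|\cdot\|_\e$ and the standard $L_2$-norm at the cost of explicit powers of $\e$, and I would bookkeep these powers so that the stiff factor $\e^{-1}$ in front of $\int_a^0 r v^2$ does not degrade the order.

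The second estimate \eqref{EstNU} follows from the second assertion of Lemma~\ref{LemmaVishik}: once a spectral-gap is established showing that the interval $[\Lambda_{\e,n}/\e-d,\Lambda_{\e,n}/\e+d]$ contains exactly one eigenvalue for small $\e$, the lemma yields $\|U_{\e,n}/\|U_{\e,n}\|_\e - u_{\e,j}\|_\e\leq 2d^{-1}c_n\e^{n+1}$. The separation of $\mu_j$ as a simple point of $\sigma(\cA)$ (Theorem~\ref{prop: limit spectrum}(ii)) together with the number-by-number convergence $\e^{-1}\lambda^\e_j\to\mu_j$ from Theorem~\ref{thm: Convergence} supplies such a gap $d$ uniform in $\e$. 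Converting the $\cL_\e$-estimate to an $H^1(a,b)$-estimate via \eqref{NormsEqui} and identifying the normalizing multiplier $\vartheta_\e=1/\|U_{\e,n}\|_\e$ (which has a strictly positive limit because $y_0$, $z_0$ are fixed nonzero and the weights are bounded) completes \eqref{EstNU}.

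\textbf{The main obstacle} I anticipate is the $\e$-dependent metric. Because $\|\cdot\|_\e$ weights the interval $I_a$ by $\e^{-1}$, a residual that is uniformly $O(\e^{n+1})$ in $L_2(a,b)$ could be amplified to $O(\e^{n+1/2})$ in $\cL_\e$, so I must check that the part of the residual supported on $I_a$ is itself small enough (ideally $O(\e^{n+3/2})$ in the standard norm) for the stiff factor to be absorbed. This is exactly why the transmission condition $(ky_n')(0)=(\kappa z_{n-1}')(0)$ appears in \eqref{SimpleProbYn}: it guarantees that the jump in flux enters only at the next order, keeping the $I_a$-residual one power of $\e$ ahead. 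I would verify this bookkeeping carefully, as it is the delicate point on which the whole order-counting rests; everything else is the routine Fredholm-and-Vishik--Lyusternik machinery already set up in the preliminaries.
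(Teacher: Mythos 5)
Your overall strategy is the paper's: quasimodes for the self-adjoint operator $A_\e$ in $\cL_\e$ plus the Vishik--Lyusternik lemma, with the spectral gap supplied by Theorem \ref{thm: Convergence} (and your rescaling to $\e^{-1}A_\e$, which makes the gap $d$ uniform, is a correct equivalent of the paper's $O(\e)$-interval $[\Lambda_{\e,n}-d\e,\Lambda_{\e,n}+d\e]$). But there is a missing step at the very point where you apply the lemma: the quasimode must belong to $\cD(A_\e)$, and $U_{\e,n}$ does not, because the transmission residual $\beta_{\e,n}=(kU_{\e,n}')(-0)-\e(\kappa U_{\e,n}')(+0)=O(\e^{n+1})$ is nonzero in general. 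You acknowledge ``an analogous bound on the transmission-condition discrepancy,'' but a bound does not help: as long as $U_{\e,n}\notin\cD(A_\e)$, the pair $(\e^{-1}\Lambda_{\e,n},\,U_{\e,n}/\|U_{\e,n}\|_\e)$ is not a quasimode in the sense of the definition and the lemma does not apply. The paper repairs this explicitly: with $\phi(x)=x(\frac{x}{a}-1)$ on $(a,0)$ and $\phi=0$ elsewhere, the corrected function $V_{\e,n}=U_{\e,n}+\beta_{\e,n}\phi$ lies in $\cD(A_\e)$ and satisfies $\|A_\e V_{\e,n}-\Lambda_{\e,n}V_{\e,n}\|_\e\le c_n\e^{n+3/2}$, which after normalization ($\|V_{\e,n}\|_\e=O(\e^{-1/2})$) gives accuracy $c_n\e^{n+2}$ and hence \eqref{EstNLambda}. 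Some such domain correction is indispensable in your write-up.

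The second and more serious gap concerns \eqref{EstNU}: it does \emph{not} follow from the $\cL_\e$-estimate ``via \eqref{NormsEqui}.'' The Vishik--Lyusternik lemma yields a bound only in the weighted $L_2$-norm $\|\cdot\|_\e$, and \eqref{NormsEqui} gives $c\|\phi\|\le\|\phi\|_\e$, i.e., control of the standard $L_2$-norm, never of $H^1(a,b)$; no combination of the inequalities in \eqref{NormsEqui} upgrades an $L_2$-type bound to an $H^1$ bound. Moreover, after un-normalizing (under \eqref{NormalizationL2} one has $\|u_{\e,j}\|_\e\sim\e^{-1/2}$), the lemma gives only $\|u_{\e,j}-\vartheta_\e V_{\e,n}\|_\e\le C_n\e^{n+1/2}$, half a power short even in $L_2$. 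The paper closes this by an energy bootstrap that your proposal omits: both $u_\e$ and $V_{\e,n}$ satisfy the variational identity (the latter up to a functional $\alpha_\e$ with $|\alpha_\e(\psi)|\le c\,\e^{n+1/2}\|\psi\|_{\cH_\e}$), which yields $\|u_\e-\vartheta_\e V_{\e,n}\|_{\cH_\e}\le C_n\e^{n+3/2}$, and only then the inequality $\|\phi\|_{\cH_\e}\ge c\,\e^{1/2}\|\phi\|_1$ from \eqref{NormsEqui} produces the $H^1$-estimate of order $\e^{n+1}$. Finally, your normalizing multiplier is misidentified: $\vartheta_\e=1/\|U_{\e,n}\|_\e$ tends to $0$ (since $\|U_{\e,n}\|_\e\sim\e^{-1/2}$ when $y_0\not\equiv 0$), contradicting the asserted strictly positive limit; the correct choice is $\vartheta_\e=\|u_\e\|_\e/\|V_{\e,n}\|_\e$, whose limit is positive because both norms blow up at the same rate. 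Your instinct about the stiff factor $\e^{-1}$ on $I_a$ and the role of the flux-matching condition in \eqref{SimpleProbYn} is sound, but as written the proof of \eqref{EstNU} fails at the norm-conversion step.
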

\begin{proof} The case $z'(0)=0$ was considered in Remarks \ref{remDontDepend} and \ref{remz'(0)=0}.
Suppose that $z'(0)\neq 0$. We first check that the the series  being constructed
in \ref{subsec: Case mu0 in spectrum of A1} give us the quasimodes with accuracy to an arbitrary order.
It follows from \eqref{SimpleProbYn}, \eqref{SimpleProbZn} that
\begin{equation}\label{AeEstim}
    \left|
     r_\e^{-1}( k_\e U_{\e, n}')'+\Lambda_{\e, n}U_{\e, n}
    \right|\leq c_n \e^{n+2}
\end{equation}
in $[a,b]$ uniformly, $U_{\e, n}(a)=U_{\e, n}(b)=0$, $U_{\e, n}(-0)=U_{\e, n}(+0)$ and
\begin{equation}\label{ResidualBeta}
    \betaen=(kU_{\e,n}')(-0) - \e (\kappa U_{\e,n}')(+0)=O(\e^{n+1}),\quad \e\to 0.
\end{equation}
Note that $U_{\e,n}$ doesn't belong to the domain of $A_\eps$ since $\betaen$ is different from zero in the
general case. Set $\phi(x)=x(\frac{x}{a}-1)$ for $x\in (a,0)$ and $\phi(x)=0$ elsewhere. Then
$V_{\e,n}=U_{\e,n}+\betaen\phi$ belongs to $\cD(A_\e)$ and a simple computation gives
$ 
    \|A_\e V_{\e, n}-\Lambda_{\e, n}V_{\e, n}\|_\e\leq c_n \e^{n+3/2}.
$ 
Hence $(\Lambda_{\e, n}, V_{\e, n}/\|V_{\e, n}\|_\e)$
is a quasimode of operator $A_\eps$ with accuracy to $c_n\e^{n+2}$ because $\|V_{\e, n}\|_\e=O(\e^{-1/2})$.
According to the Vishik-Lyusternik Lemma  there exists an eigenvalue $\lme$ of $A_\e$ such that
$|\lme-\Lambda_{\e, n}|\leq c_n \e^{n+2}$, which establishes \eqref{EstNLambda}.
Moreover, there exists an unique eigenvalue $\lme=\lambda^\e_j$ with such asymptotics by Theorem \ref{thm: Convergence}.
Next, for a certain $d>0$ segment $[\Lambda_{\e, n}-d\eps, \Lambda_{\e, n}+d\eps]$ contains one
and only one eigenvalue of $A_\e$.  Repeated application of
Lemma \ref{LemmaVishik} enables us to write
$\bigl\|\|u_\e\|_\e^{-1}\cdot u_\e- \|V_{\e, n}\|_\e^{-1}\cdot V_{\e, n}\bigr\|_\e\leq 2c_n d^{-1} \e^{n+1}$,
where $u_\e=u_{\e,j}$.
Hence, by \eqref{NormsEqui}
$$
\biggl\| u_\e- \frac{\|u_\e\|_\e}{\|V_{\e, n}\|_\e} V_{\e, n}\biggr\|_\e
\leq \frac{2c_n}{d}\|u_\e\|_\e \e^{n+1}\leq C_n \e^{n+1/2}
$$
and $\vartheta_\e =\frac{\|u_\e\|_\e}{\|V_{\e, n}\|_\e}$ converges to $1$ by Theorem \ref{thm: Convergence}.

Pair $(\lme ,u_\e)$ satisfies identity $\langle u_\e, \psi\rangle_\e=\lme (u_\e, \psi)_\e$ for all $\psi\in H_0^1(a,b)$.
Similarly, $\langle V_{\e, n}, \psi\rangle_\e= \Lambda_{\e, n}(V_{\e, n}, \psi)_\e+\alpha_\e(\psi)$, where
$|\alpha_\e(\psi)|\leq c \e^{n+1/2} \pre{\psi}$. The latter gives
\begin{equation*}
    \begin{aligned}
        \pre{u_\e-\vartheta_\e V_{\e, n}}\leq \Lambda_{\e, n} \|u_\e-\vartheta_\e V_{\e, n}\|_\e+
        |\lme-\Lambda_{\e, n}|\,\|u_\e\|_\e+|\alpha_\e(u_\e-\vartheta_\e V_{\e, n})|\\\leq
        2\mu_j \,C_n \e^{n+3/2} + c_n \|u_\e\| \,\e^{n+3/2}+  c \e^{n+1/2} \pre{u_\e-\vartheta_\e V_{\e, n}}
    \end{aligned}
\end{equation*}
and consequently $\pre{u_\e-\vartheta_\e V_{\e, n}}\leq C_n \e^{n+3/2}$.
From this and \eqref{NormsEqui} we thus get estimate \eqref{EstNU}.
\end{proof}

The same proof works  for the rest part of the simple spectrum of $\cA$.
\begin{thm}\label{ThSimple2}
If $\mu_j\in \sigma ( \hat{A}_2) \backslash \sigma (A_1)$,  then  eigenfunction $u_{\e,j}$
of  \eqref{eq1:ue}-\eqref{bc:ue} with eigenvalue $\lambda_j^\e$ converges towards function
  \begin{equation*}\label{Case2LimitFunction}
 u(x)=\begin{cases}
  \kern10pt 0 & \text{for } x \in I_a,\\
  z(x) & \text{for } x\in I_b
\end{cases}
\end{equation*}
in $H^1(a,b)$, where $z$ is an eigenfunction of the problem $(\kappa z')'+\mu\, \rho z=0$
in $I_b$, $z(0)=0$, $z(a)=0$
with eigenvalue $\mu_j$.
Moreover $\lambda^\e_j$ and $u_{\e,j}$ admit asymptotic expansions \eqref{NCexpansionL}, \eqref{NCexpansionU}
obtained in \ref{subsec 412} for $\mu=\mu_j$ with the estimates of remainder terms
\begin{gather*}
    \left|\e^{-1}\lambda^\e_j-(\mu_j+\e \nu_1+\cdots+\e^n \nu_n)\right|\leq c_n \e^{n+1},\qquad
    \|u_{\e,j}-\vartheta_\e U_{\e, n}\|_{H^1(a,b)}\leq C_n \e^{n+1}.
\end{gather*}
Here $\vartheta_\e$ is a normalizing multiplier that converges to a positive constant as $\e\to 0$.
\end{thm}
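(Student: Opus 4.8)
The plan is to follow the proof of Theorem~\ref{ThSimple1} almost verbatim, since the statement explicitly says ``the same proof works.'' The key observation is that in the case $\mu_j \in \sigma(\hat{A}_2)\setminus\sigma(A_1)$ the recurrent problems \eqref{SimpleProbZZn} and their companion $y_n$-problems from \ref{subsec 412} are uniquely solvable, so the partial sums $U_{\e,n}$ and $\Lambda_{\e,n}$ are well-defined polynomials in $\e$ with smooth coefficients. First I would substitute $U_{\e,n}$ into the operator $A_\e$ and verify, using the defining recurrences for $y_n$, $z_n$ and the choice $\nu_n = -(\kappa z_0')(0)\,y_n(0)$, that
\begin{equation*}
    \left| r_\e^{-1}(k_\e U_{\e,n}')' + \Lambda_{\e,n} U_{\e,n}\right| \leq c_n \e^{n+2}
\end{equation*}
uniformly on $[a,b]$, that the boundary and continuity conditions $U_{\e,n}(a)=U_{\e,n}(b)=0$, $U_{\e,n}(-0)=U_{\e,n}(+0)$ hold exactly, and that the transmission residual $\betaen = (kU_{\e,n}')(-0)-\e(\kappa U_{\e,n}')(+0)$ is $O(\e^{n+1})$ because the term $(\kappa z_{n-1}')(0)$ is exactly what the flux condition in the $y_n$-problem absorbs.

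Next I would correct $U_{\e,n}$ to land in $\cD(A_\e)$ by adding $\betaen \phi$ with the same cutoff $\phi(x)=x(\frac{x}{a}-1)$ on $(a,0)$ and zero elsewhere, forming $V_{\e,n}=U_{\e,n}+\betaen\phi$. A direct computation then gives $\|A_\e V_{\e,n}-\Lambda_{\e,n}V_{\e,n}\|_\e \leq c_n \e^{n+3/2}$, and since $\|V_{\e,n}\|_\e = O(\e^{-1/2})$ the normalized pair $(\Lambda_{\e,n}, V_{\e,n}/\|V_{\e,n}\|_\e)$ is a quasimode of the self-adjoint operator $A_\e$ with accuracy $c_n\e^{n+2}$. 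The Vishik--Lyusternik Lemma~\ref{LemmaVishik} then produces an eigenvalue $\lme$ within $c_n\e^{n+2}$ of $\Lambda_{\e,n}$, and Theorem~\ref{thm: Convergence} identifies it as $\lambda_j^\e$ and guarantees uniqueness of the eigenvalue with this asymptotics, which yields the estimate on $\e^{-1}\lambda_j^\e$.

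For the eigenfunction estimate I would again invoke the second part of Lemma~\ref{LemmaVishik}: since $\lambda_j^\e$ is the unique eigenvalue in a segment of radius $\sim d\e$ (the spectral gap survives because the $\mu_j$ is a simple limit eigenvalue), I obtain $\|u_\e\|_\e^{-1}u_\e - \|V_{\e,n}\|_\e^{-1}V_{\e,n}$ small in $\cL_\e$-norm, convert to the $\cH_\e$-norm using the variational identities $\langle u_\e,\psi\rangle_\e = \lme(u_\e,\psi)_\e$ and $\langle V_{\e,n},\psi\rangle_\e = \Lambda_{\e,n}(V_{\e,n},\psi)_\e + \alpha_\e(\psi)$ with $|\alpha_\e(\psi)|\leq c\e^{n+1/2}\pre{\psi}$, and finally pass to the standard $H^1(a,b)$ norm via the equivalence \eqref{NormsEqui}; the normalizing multiplier $\vartheta_\e = \|u_\e\|_\e/\|V_{\e,n}\|_\e$ tends to a positive limit by Theorem~\ref{thm: Convergence}. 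The only genuine differences from the first case are bookkeeping: here $y_0=0$, the leading profile lives on $I_b$, and the solvability condition that fixes $\nu_n$ comes from the $z_n$-equation rather than the $y_n$-equation. I expect the main (and only) obstacle to be checking carefully that the flux mismatch $\betaen$ really is $O(\e^{n+1})$ and not merely $O(\e^n)$, since the whole accuracy of the quasimode hinges on the transmission condition being matched one order beyond the naive expectation; this is exactly where the specific form of the recurrence \eqref{SimpleProbZZn} and the coupling $(kv_n')(0)=(\kappa z_{n-1}')(0)$ must be used, and I would verify it by induction on $n$.
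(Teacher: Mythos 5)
Your proposal is correct and is exactly the paper's intended argument: the paper gives no separate proof of Theorem~\ref{ThSimple2}, stating only that the proof of Theorem~\ref{ThSimple1} carries over, and you reproduce that proof faithfully with the right case-specific adjustments ($y_0=0$, solvability fixing $\nu_n$ from the $z_n$-problem). Your flagged concern about $\betaen=O(\e^{n+1})$ resolves by the same telescoping cancellation as in \eqref{ResidualBeta}, since the flux conditions $(ky_n')(0)=(\kappa z_{n-1}')(0)$ cancel all terms except $-\e^{n+1}(\kappa z_n')(0)$.
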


\subsection{Double Spectrum}
We introduce the partial sums of \eqref{CC expansion:lme pm}, \eqref{CC expansion:ue pm}
\begin{align}\label{eigenmodes}
 \Lmen^\pm & = \e ( \mu_j \pm \e^{1/2}\omega + \e \nu_2^\pm + \dots + \e^{n/2} \nu_n^\pm ), \\
 \Uen^\pm & = \left\{\begin{array}{ll}
\mp\e^{1/2}w + \e w_2^\pm + \dots + \e^{n/2}w_{n}^\pm& \text{for } x\in I_a\\
v+\e^{1/2} v_1^\pm + \dots + \e^{n/2}v_n^\pm& \text{for }  x\in I_b
\end{array}
\right.
\end{align}
with all coefficients constructed in Section \ref{sec: Asymptotic expansions. The critical cases} for certain double eigenvalue $\mu=\mu_j=\mu_{j+1}$.
Set $\Ven^\pm=\Uen^\pm+\betaen^\pm\phi$, where $\betaen^-$ and $\betaen^+$ are residuals in condition
\eqref{ic1:ue} for $\Uen^-$ and $\Uen^+$ respectively defined similarly as in \eqref{ResidualBeta}.
Moreover, $\betaen^\pm=O(\e^{(n+1)/2})$ as $\e\to 0$.

Analysis similar to that in the proof of Theorem \ref{ThSimple1} leads to the following result.
\begin{prop}\label{PropQuiasimodesDouble}
 The pairs $(\Lambda_{\e, n}^-, \Ven^-/\|\Ven^-\|_\e)$ and $(\Lambda_{\e, n}^+, \Ven^+/\|\Ven^+\|_\e)$
are quasimodes of operator $A_\eps$ with accuracy to $c_n\e^{n/2}$.
\end{prop}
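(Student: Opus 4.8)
The plan is to follow the scheme of Theorem~\ref{ThSimple1}: from the truncated series we manufacture an honest element of $\cD(A_\e)$ whose image under $A_\e-\Lmen^\pm$ is small in $\cL_\e$, and then pass to the normalized function. Both branches are treated at once, so I suppress the superscript $\pm$ and write $\Lmen$, $\Uen$, $\Ven$, $\nu_j$, $v_k$, $w_k$, keeping in mind that $\mu^\e_n:=\e^{-1}\Lmen=\mu\pm\e^{1/2}\om+\e\nu_2+\dots+\e^{n/2}\nu_n$.

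First I would prove the interior residual estimate, the analogue of \eqref{AeEstim}. Inserting $\Uen$ into the rescaled expressions $(k\Uen')'+\mu^\e_n r\Uen$ on $I_a$ and $(\kappa\Uen')'+\mu^\e_n\rho\Uen$ on $I_b$ and collecting powers of $\e^{1/2}$, the coefficient of every $\e^{k/2}$ with $0\le k\le n$ vanishes (the $k=0$ term being present only on $I_b$): this is precisely what the recurrent problems \eqref{CC v0}--\eqref{CC wn} were engineered to achieve, their right-hand sides and their data at $x=0$ cancelling these orders on the two intervals simultaneously. The first surviving term is $O(\e^{(n+1)/2})$ on each interval in this $\mu$-scale, and restoring the original scaling (each of $k_\e$ on $I_b$ and $r_\e^{-1}$ on $I_a$ contributing a factor $\e$) yields the uniform pointwise bound $|(A_\e-\Lmen)\Uen|\le c_n\e^{(n+3)/2}$ on $(a,b)$, together with $\Uen(a)=\Uen(b)=0$ and $\Uen(-0)=\Uen(+0)$.

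Next I would correct the transmission condition. The function $\Uen$ fails to lie in $\cD(A_\e)$ because the residual of the flux condition, $\betaen=(k\Uen')(-0)-\e(\kappa\Uen')(+0)$, is nonzero; reading off \eqref{CC w0}--\eqref{CC wn} one checks $w_1'(0)=0$ and $(kw_m')(0)=(\kappa v_{m-2}')(0)$ up to the truncation order, so that $\betaen=O(\e^{(n+1)/2})$. Adding the fixed corrector $\betaen\phi$ supported on $I_a$, exactly as in Theorem~\ref{ThSimple1}, we obtain $\Ven\in\cD(A_\e)$; since $(A_\e-\Lmen)\phi=O(\e)$ uniformly on $I_a$, this correction perturbs the residual by $\betaen(A_\e-\Lmen)\phi=O(\e^{(n+3)/2})$ pointwise. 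Passing to the $\cL_\e$-norm, the weight $\e^{-1}$ carried on $I_a$ converts the pointwise order $\e^{(n+3)/2}$ into $\|(A_\e-\Lmen)\Ven\|_\e=O(\e^{(n+2)/2})$.

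Finally I would normalize. Because the leading profile $v$ now lives on the light part $I_b$, where the metric of $\cL_\e$ is of order one, $\|\Ven\|_\e^2\to\int_a^0 r w^2\,dx+\int_0^b\rho v^2\,dx=2$, so $\|\Ven\|_\e$ stays bounded away from $0$ and $\infty$ and dividing is harmless: $\bigl\|(A_\e-\Lmen)\,\Ven/\|\Ven\|_\e\bigr\|_\e=O(\e^{(n+2)/2})\le c_n\e^{n/2}$, which is the claim. I expect the main obstacle to be the bookkeeping in the first step, namely verifying, on both intervals and for both branches, that all half-integer orders cancel through the coupled recurrences and the matching data at $x=0$. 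It is also worth underlining the structural contrast with the simple spectrum: there the leading profile sits on the heavy part $I_a$, so that $\|\Ven\|_\e=O(\e^{-1/2})$, which buys an extra factor $\e^{1/2}$ and lowers the attainable accuracy; here that gain is absent, and this is exactly why the natural order of approximation is the stated $\e^{n/2}$ rather than an integer power of $\e$.
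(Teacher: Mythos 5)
Your proposal is correct and follows essentially the paper's own route: the paper proves this proposition merely by invoking the analysis of Theorem~\ref{ThSimple1}, and your write-up carries out exactly that analysis adapted to the half-power expansions, with the same exponents the paper records --- $\betaen^\pm=O(\e^{(n+1)/2})$, interior residual $O(\e^{(n+3)/2})$ pointwise hence $O(\e^{(n+2)/2})$ in $\cL_\e$ after the weight $\e^{-1}$ on $I_a$, and $\|\Ven^\pm\|_\e$ bounded away from $0$ and $\infty$ because the leading profile $v$ lives on the light side. Your closing remark on the missing $\e^{1/2}$ gain from normalization (present in Theorem~\ref{ThSimple1}, where $\|\Ven\|_\e\sim\e^{-1/2}$) correctly explains the structure of the stated accuracy $c_n\e^{n/2}$.
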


\begin{prop}
There exist two  closely adjacent eigenvalues $\lm_\e^-$ and $\lm_\e^+$ of {\PRef}
with the asymptotics
  \begin{equation}\label{PairAsympt}
    \frac{\lm_\e^\pm}{\e}=\mu_j \pm \sqrt{\e}\omega + \e \nu_2^\pm + \dots + \e^{n/2} \nu_n^\pm+O(\e^{(n+1)/2}),
  \end{equation}
where $\mu_j$ is a double eigenvalue of operator $\cA$ and $\omega$, $\nu_k^\pm$
were defined in Sec. \ref{sec: Asymptotic expansions. The critical cases}.
\end{prop}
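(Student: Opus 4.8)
The plan is to derive the asymptotics \eqref{PairAsympt} from the quasimode estimate of Proposition \ref{PropQuiasimodesDouble} together with the Vishik--Lyusternik lemma, exactly mirroring the structure of the proof of Theorem \ref{ThSimple1} but taking care of the two-dimensional root space. First I would apply Lemma \ref{LemmaVishik} to each quasimode $(\Lmen^\pm, \Ven^\pm/\|\Ven^\pm\|_\e)$ separately: the accuracy is $c_n\e^{n/2}$, so each interval $[\Lmen^\pm - c_n\e^{n/2},\, \Lmen^\pm + c_n\e^{n/2}]$ contains an eigenvalue of $A_\e$. Since $A_\e$ is self-adjoint in $\cL_\e$ and its spectrum is discrete, this produces eigenvalues with the claimed leading behaviour once I divide by $\e$; the $\pm\sqrt{\e}\,\omega$ terms come from the $\e^{1/2}\omega$ in $\Lmen^\pm$, and the higher correctors $\nu_k^\pm$ are matched to the order $\e^{n/2}$ accuracy.

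The key issue, and the step I expect to be the main obstacle, is proving that the two eigenvalues produced really are \emph{distinct} and are consecutive in the enumeration, i.e.\ that the construction does not merely detect the same eigenvalue twice. By Theorem \ref{thm: Convergence} we already know that to the double eigenvalue $\mu_j=\mu_{j+1}$ of $\cA$ there correspond exactly two eigenvalues $\lm^\e_j,\lm^\e_{j+1}$ of the perturbed problem, both satisfying $\e^{-1}\lm^\e_j\to\mu_j$. The separation of the two branches must therefore be extracted at the next order: the crucial point is that $\omega=(\kappa w v')(0)>0$, so for small $\e$ the centres $\Lmen^-$ and $\Lmen^+$ differ by $2\e^{3/2}\omega + O(\e^2)$, which strictly exceeds twice the quasimode accuracy $c_n\e^{n/2+1}$ once $n\ge 2$. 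I would argue that the two Vishik--Lyusternik intervals are then disjoint, so the detected eigenvalues are genuinely two different points of $\sigma(A_\e)$; combined with the number-by-number correspondence from Theorem \ref{thm: Convergence} this identifies them with $\lm^\e_j$ and $\lm^\e_{j+1}$, which I relabel $\lm_\e^-$ and $\lm_\e^+$.

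To make the branches match the correct superscript I would check the ordering: because $\omega>0$, the branch with $\nu_1=-\omega$ gives the smaller eigenvalue $\lm_\e^-$ and the branch with $\nu_1=+\omega$ gives the larger $\lm_\e^+$, consistent with the sign convention in \eqref{CC expansion:lme pm}. Finally, to upgrade the estimate to the stated accuracy $O(\e^{(n+1)/2})$ in \eqref{PairAsympt} I would run the argument for the partial sum of order $n+1$ rather than $n$: Proposition \ref{PropQuiasimodesDouble} applied to $\Ven^\pm$ of length $n+1$ yields accuracy $c_{n+1}\e^{(n+1)/2}$, and Lemma \ref{LemmaVishik} then places the eigenvalue within $c_{n+1}\e^{(n+1)/2}$ of $\Lmen^\pm$; dividing by $\e$ and absorbing the tail terms into the remainder produces precisely \eqref{PairAsympt}. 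The only analytic inputs beyond the quasimode construction are the positivity $\omega>0$, already established in Section \ref{sec: Asymptotic expansions. The critical cases}, and the self-adjointness of $A_\e$ in $\cL_\e$, which guarantees the real discrete spectrum needed to invoke the lemma.
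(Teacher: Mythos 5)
Your strategy coincides with the paper's own proof: feed the quasimodes of Proposition \ref{PropQuiasimodesDouble} into the Vishik--Lyusternik lemma, separate the two branches by showing the localization intervals around $\Lmen^-$ and $\Lmen^+$ are disjoint, and rerun the argument at a higher order to reach the stated accuracy. (Your extra identification of the two eigenvalues with $\lambda^\e_j,\lambda^\e_{j+1}$ via Theorem \ref{thm: Convergence} is fine; the paper records it only after Theorem \ref{TwoPlanes}.) However, your exponent bookkeeping fails at two points, both traceable to the factor $\e$ separating the eigenvalues of $A_\e$ themselves (which are $\lm_\e$, of size $O(\e)$) from the expanded quantity $\lm_\e/\e$. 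First, Proposition \ref{PropQuiasimodesDouble} gives accuracy $c_n\e^{n/2}$, not the $c_n\e^{n/2+1}$ you invoke in the separation step; with the actual accuracy, the intervals of radius $c_n\e^{n/2}$ around centres differing by $2\omega\e^{3/2}+O(\e^2)$ are disjoint only for $n>3$, which is the threshold the paper states, not your $n\ge 2$. This slip is harmless, since the expansions exist at every order, but as written your disjointness claim rests on an accuracy the cited proposition does not provide (and contradicts the radius $c_n\e^{n/2}$ you yourself use one sentence earlier).

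The second slip is genuine: your upgrade step does not deliver \eqref{PairAsympt}. Applying the lemma to the partial sum of length $n+1$ localizes the eigenvalue of $A_\e$ within $c_{n+1}\e^{(n+1)/2}$ of $\Lambda_{\e,n+1}^\pm$; since \eqref{PairAsympt} concerns $\lm_\e^\pm/\e$, dividing by $\e$ leaves an error $c_{n+1}\e^{(n-1)/2}$, two half-powers short of the claimed $O(\e^{(n+1)/2})$. The paper instead runs the argument at order $n+3$: from $|\lm_\e^\pm-\Lambda_{\e,n+3}^\pm|\le c_{n+3}\e^{(n+3)/2}$, division by $\e$ gives $\e^{(n+1)/2}$, and the discarded tail terms $\e^{(n+k)/2}|\nu_{n+k}^\pm|$, $k=1,2,3$, are themselves $O(\e^{(n+1)/2})$. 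Replace your ``order $n+1$'' by ``order $n+3$'' and absorb those three tail terms explicitly, and your proof closes; as written, the final estimate is false.
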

\begin{proof}
  From Proposition \ref{PropQuiasimodesDouble} and the Vishik-Lyusternik Lemma it follows that
there exists at least one eigenvalue of $A_\e$ in each $\e^{n/2}$-vicinity of $\Lmen^-$ and $\Lmen^+$.
Moreover, $|\lm_\e^\pm -\Lmen^\pm|\le c_n \e^{n/2}$. Evidently, eigenvalues  $\lm_\e^-$, $\lm_\e^+$ are different,
because $\Lmen^+-\Lmen^-\geq \omega\e^{3/2}$ and $\e^{n/2}$-vicinities of $\Lmen^-$ and $\Lmen^+$
don't intersect for $n>3$ and sufficient small $\e$. In fact, $|\lm_\e^+-\lm_\e^-|\geq c\e^{3/2}$
for certain positive $c$. We conclude from $|\lm_\e^\pm -\Lambda_{\e,n+3}^\pm|\le c_{n+3} \e^{(n+3)/2}$ that
\begin{equation*}
    \left|\frac{\lm_\e^\pm}{\e}-(\mu_j \pm \sqrt{\e}\omega +  \dots + \e^\frac{n}{2} \nu_n^\pm)\right|
    \leq c_{n+3} \e^{\frac{n+1}{2}}+\sum_{k=1}^3\e^{\frac{n+k}{2}}|\nu_{n+k}^\pm|\leq C_n\e^{\frac{n+1}{2}},
\end{equation*}
which establishes \eqref{PairAsympt}.
\end{proof}

We consider two planes in $L_2(a,b)$. Let $\pi$ be the root subspace that corresponds to double
eigenvalue $\mu_i$ and $\pi(\e)$ be the linear span of two eigenfunctions $u_\e^-$ and $u_\e^+$
that correspond to eigenvalues $\lm_\e^-$ and $\lm_\e^+$. These eigenfunctions as above
are normalized by \eqref{NormalizationL2}.

\begin{thm}\label{TwoPlanes}
The root subspace $\pi$ is the limit position of plane $\pi(\e)$ as $\e\to 0$, namely
  $\|P_{\pi(\e)}-P_\pi\|\to 0$, where $P_{\pi(\e)}$ and $P_\pi$ are the orthogonal projectors
  onto planes $\pi(\e)$ and $\pi$ respectively.
\end{thm}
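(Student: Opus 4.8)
The plan is to exhibit both planes as ranges of Riesz spectral projections and to read off the convergence of the orthogonal projectors from the norm convergence of those spectral projections. I work with the rescaled operator $\cA_\e$ of the proof of Theorem \ref{thm: Convergence}, whose eigenvalues are the numbers $\e^{-1}\lm^\e_j$ and whose eigenvectors, regarded as elements of $\cL$, are the eigenfunctions $u_{\e,j}$; thus $\pi(\e)$ is the eigenspace of $\cA_\e$ attached to the two simple eigenvalues $\e^{-1}\lm_\e^-$ and $\e^{-1}\lm_\e^+$. Since $\cA_\e$ is selfadjoint in $\cL_\e$ it carries no Jordan cells, and because $\cL$ and $\cL_\e$ are the same space with equivalent norms for each fixed $\e$, this eigenspace is genuinely two-dimensional and equal to $\pi(\e)$. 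I fix once and for all a circle $\Gamma\subset\mathbb{C}$ about $\mu_j$ whose interior meets $\sigma(\cA)$ only at $\mu_j$ (possible as $\sigma(\cA)$ is discrete) and put $Q_\e=\frac1{2\pi i}\oint_\Gamma \cR_z(\cA_\e)\,dz$ and $Q=\frac1{2\pi i}\oint_\Gamma \cR_z(\cA)\,dz$.

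By the number-by-number eigenvalue convergence of Theorem \ref{thm: Convergence} together with the two-term asymptotics \eqref{PairAsympt}, for all small $\e$ the only eigenvalues of $\cA_\e$ inside $\Gamma$ are $\e^{-1}\lm_\e^\pm$, and $\Gamma$ remains in $\varrho(\cA_\e)$. Hence $Q_\e$ and $Q$ are bounded idempotents with $\mathrm{Ran}\,Q_\e=\pi(\e)$ and $\mathrm{Ran}\,Q=\pi$, the latter being the full root space spanned by the eigenvector and the adjoined vector of Theorem \ref{prop: limit spectrum}. It remains to upgrade the resolvent convergence to convergence uniform along $\Gamma$. The factorisation \eqref{preRes} shows that $\cR_z(\cA_\e)\to\cR_z(\cA)$ in the uniform operator topology of $\cL$, uniformly for $z$ in the compact set $\Gamma$, since the inverse of the operator matrix in \eqref{preRes} tends to its limit uniformly while $\cR_z(A_1)$ and $\cR_z(\hat A_2)$ are continuous on $\Gamma\subset\varrho(\cA)$. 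Integrating over $\Gamma$ yields $\|Q_\e-Q\|\to0$.

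To pass from idempotents to orthogonal projectors I choose an $L_2(a,b)$-orthonormal basis $\{e_1,e_2\}$ of $\pi$. As $e_i\in\mathrm{Ran}\,Q$ we have $Qe_i=e_i$, so $Q_\e e_i\to e_i$ in $\cL$, hence in $L_2(a,b)$ because the weight $R$ is bounded above and below; moreover $Q_\e e_i\in\pi(\e)$. For small $\e$ the pair $\{Q_\e e_1,Q_\e e_2\}$ is close to an orthonormal set, so it is linearly independent and spans the two-dimensional plane $\pi(\e)$. Writing the $L_2(a,b)$-orthogonal projector onto the span of a linearly independent pair as $P=G(G^*G)^{-1}G^*$, with $G\colon\mathbb{C}^2\to L_2(a,b)$ the synthesis map of the pair and $G^*G$ its Gram matrix, the maps $G_\e$ and $G_\e^*$ converge to the synthesis map of $\{e_1,e_2\}$ and its adjoint; the limit Gram matrix is the identity, so by continuity of matrix inversion $P_{\pi(\e)}\to P_\pi$ in norm, which is the assertion.

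The delicate step, and the one I expect to be the main obstacle, is the identification $\mathrm{Ran}\,Q_\e=\pi(\e)$: it hinges on enclosing inside $\Gamma$ exactly the two bifurcating eigenvalues $\e^{-1}\lm_\e^\pm$ and none of their neighbours, for which the mere convergence $\e^{-1}\lm^\e_j\to\mu_j$ does not suffice and the finer splitting \eqref{PairAsympt} obtained in Section \ref{sec: Asymptotic expansions. The critical cases} is required. An alternative route avoids Riesz projections entirely: the combinations $\tfrac12(u_\e^++u_\e^-)\to U$ and $\tfrac1{2\sqrt\e}(u_\e^+-u_\e^-)\to\om U_*$ in $L_2(a,b)$ supply a basis of $\pi(\e)$ converging to the basis $\{U,U_*\}$ of $\pi$, after which the same Gram-matrix argument closes the proof; there the main difficulty is the $\sqrt\e$-order convergence of the antisymmetric combination and the transfer of the remainder bounds from the singular metric $\cL_\e$ to $L_2(a,b)$ through \eqref{NormsEqui}.
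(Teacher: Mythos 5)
Your proposal is correct, but your main route is genuinely different from the paper's. The paper never mentions Riesz projections: it picks the explicit $L_2(R,(a,b))$-orthogonal basis $f_\e=\frac12(u_\e^++u_\e^-)$, $g_\e=\frac1{2\omega\sqrt\e}(u_\e^+-u_\e^-)$ of $\pi(\e)$, shows $f_\e\to U$ by Theorem \ref{thm: Convergence}, writes down the transmission boundary-value problem that $g_\e$ satisfies, derives piecewise $H^2$-bounds for $g_\e$ (using \eqref{PairAsympt}, i.e.\ $\e^{-3/2}(\lm_\e^+-\lm_\e^-)\to 2\omega$, and the $\cL_\e$-orthogonality of the right-hand side to $u_\e^+$), and then identifies every partial limit of $g_\e$ with a point on the line $\{\alpha U_*\}$ via the limit problem together with the orthogonality inherited from $(f_\e,g_\e)$ --- which is exactly the ``alternative route'' you sketch in your last paragraph. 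Your contour-integral argument is softer: it recycles the factorisation \eqref{preRes} already proved in Theorem \ref{thm: Convergence}, upgrading it to uniformity on a compact contour $\Gamma\subset\varrho(\cA)$ (legitimate, since $S_\mu$, $T_\mu$ and the two resolvents depend continuously on $\mu$ there), and it delivers the full root space as $\mathrm{Ran}\,Q$ without any a priori estimate, subsequence extraction, or identification of the limit of the antisymmetric combination; your Gram-matrix step $P=G(G^*G)^{-1}G^*$ correctly converts convergence of the skew idempotents into convergence of orthogonal projectors. What the paper's route buys instead is quantitative information your route discards: the explicit $\sqrt\e$-scale limit of $u_\e^+-u_\e^-$, which is restated as a separate claim in Theorem \ref{ThDouble}. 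Two minor remarks. First, your ``delicate step'' is less delicate than you fear: since the perturbed eigenvalues are simple and increasing in $j$, the number-by-number convergence $\e^{-1}\lm^\e_i\to\mu_i$ for $i\in\{j-1,\dots,j+2\}$ alone traps exactly the two bifurcating eigenvalues inside a small fixed $\Gamma$; the finer splitting \eqref{PairAsympt} is not needed for the enclosure (the paper needs it elsewhere, to control $g_\e$). Second, the paper orthogonalises in $L_2(R,(a,b))$ while you work in $L_2(a,b)$; since $R$ is bounded above and below, the norms are equivalent but the orthogonal projectors differ, so state once in which inner product $P_{\pi(\e)}$, $P_\pi$ are taken --- your argument works verbatim for either choice.
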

\begin{proof}
  Nevertheless both eigenfunction $u_\e^-$ and $u_\e^+$ converge to the same limit
being the eigenfunction  of $\cA$ with eigenvalue $\mu_j$, the $\pi_\e$ has regular asymptotic behaviour as $\e\to 0$.
We choose new  $L_2(R,(a,b))$-orthogonal basis in $\pi(\e)$:
$f_{\e}=\frac{1}{2}(u_\e^++u_\e^-)$,   $g_{\e}=\frac1{2\omega\sqrt{\e}}(u_\e^+-u_\e^-)$.

By Theorem \ref{thm: Convergence} the first vector $f_{\e}$ converges in $L_2$
towards eigenfunction $U\in \pi$ given by \eqref{UUstar}. Next,
function $g_{\e}$ solves the problem
\begin{equation*}
\begin{cases}\displaystyle
  (k g_{\e}')'+ \frac{\lm_\e^+}{\e}\, r g_{\e}=
        \frac{\lm_\e^--\lm_\e^+}{2\omega\e\sqrt{\e}}\,r u_\e^- \quad\text{in }I_a,\qquad
        \displaystyle
        (\kappa g_{\e}')'+ \frac{\lm_\e^+}{\e}\, \rho g_{\e}=
        \frac{\lm_\e^--\lm_\e^+}{2\omega\e\sqrt{\e}}\,\rho u_\e^- \quad\text{in }I_b,\\
   g_{\e}(a)=0, \quad g_{\e}(b)=0, \quad
   g_{\e}(-0)=g_{\e}(+0),\quad (k g_\e')(-0) = \e(\kappa g_\e')(+0).
\end{cases}
\end{equation*}
Since $\e^{-1}\lm_\e^+\to \mu_j$, $\e^{-3/2}(\lm_\e^+-\lm_\e^-)\to 2\omega$ by \eqref{PairAsympt}
and the right-hand side is orthogonal to the eigenfunction $u_\e^+$ in $\cL_\e$,
one obtains that norms $\|g_{\e}\|_{H^2(a,0)}$ and $\|g_{\e}\|_{H^2(0,b)}$ are bounded as $\e\to 0$.
Taking into account Corollary \ref{CorH2Convergence}  we can assert
that each converging subsequence $g_{\e'}$ converges as $\e\to 0$ towards a solution of the problem
\begin{equation*}
\begin{cases}\displaystyle
  (k g ')'+ \mu_j\, r g =0\quad\text{in }I_a,\qquad
        \displaystyle
        (\kappa g ')'+ \mu_j\, \rho g =
        -\rho v \quad\text{in }I_b,\\
   g (a)=0, \quad g (b)=0, \quad
   g (-0)=g (+0),\quad g'(-0) =0,
\end{cases}
\end{equation*}
because $u_\e^-$ converges to eigenfunction $U$, which equals $v$ in $I_b$  and vanishes in $I_a$.
Hence, all partial limits of the second basis vector $g_\e$ have to be the adjoined  vectors
corresponding to the eigenvalue $\mu_j$. In fact, by orthogonality of $f_\e$ and $g_\e$ these limits
belong to the line $\{\alpha U_* \,|\,\alpha\in \mathbb{R}\}\subset \pi$ ,
which is orthogonal to $U$ (see \eqref{UUstar} for definition of $U_*$).
\end{proof}

Indeed, in previous statements $\lm_\e^-=\lambda_j^\e$, $\lm_\e^+=\lambda_{j+1}^\e$
and $u_\e^-=u_{\e,j}$, $u_\e^+=u_{\e,j+1}$,
by Theorem \ref{thm: Convergence}.
Next theorem summarizes all information on bifurcation of the double spectrum.
\begin{thm}\label{ThDouble}
Let $\mu_j\in \sigma (A_1) \cap \sigma ( \hat{A}_2)$ be a double eigenvalue with eigenfunction $U$ and
adjoined function $U_*$ given by \eqref{UUstar}, $\mu_j=\mu_{j+1}$.
Then both eigenfunction $u_{\e,j}$ and $u_{\e,j+1}$ converge to the same eigenfunction $U$  and
the difference $\frac1{\sqrt{\e}}(u_{\e,j+1}-u_{\e,j})$ converges to adjoined function $\gamma U_*$
for certain $\gamma\neq 0$.
Besides, $\lm_\e^-=\lambda_j^\e$, $\lm_\e^+=\lambda_{j+1}^\e$
and $u_{\e,j}$, $u_{\e,j+1}$ admit asymptotic expansions 
\eqref{CC expansion:lme pm}, \eqref{CC expansion:ue pm}
derived in Section \ref{subsec 42} for $\mu=\mu_j$. The estimates of remainder terms hold
\begin{gather}\label{LpmEst}
\bigl|\e^{-1}\lm_\e^\pm-\bigl(\mu_j \pm \sqrt{\e}\omega + \e \nu_2^\pm + \dots + \e^{n/2} \nu_n^\pm\bigr)\bigr|\leq c_n^\pm \e^{(n+1)/2},
\\ \label{UpmEst}
    \|u_{\e,j}- \vartheta_\e^- U_{\e, n}^-\|_{H^1(a,b)}\leq C_n^- \e^{\frac{n+1}{2}},\quad
     \|u_{\e,j+1}- \vartheta_\e^+ U_{\e, n}^+\|_{H^1(a,b)}\leq C_n^+ \e^{\frac{n+1}{2}},
\end{gather}
where $\vartheta_\e^\pm$ are normalizing
 multipliers with strictly positive limit as $\e\to 0$.
\end{thm}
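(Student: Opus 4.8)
The plan is to follow the scheme of the proof of Theorem \ref{ThSimple1}, now run separately on the two branches "$+$" and "$-$", with the extra care forced by the fact that the two eigenvalues are only $O(\e^{3/2})$ apart. First I would take the partial sums $\Ven^\pm$ of sufficiently high order as trial functions. By Proposition \ref{PropQuiasimodesDouble} the pairs $(\Lmen^\pm, \Ven^\pm/\|\Ven^\pm\|_\e)$ are quasimodes of $A_\e$ with accuracy $c_n\e^{n/2}$, so the Vishik--Lyusternik Lemma \ref{LemmaVishik} places an eigenvalue of $A_\e$ within $c_n\e^{n/2}$ of each $\Lmen^\pm$. Combined with the preceding proposition, which already produced two closely adjacent eigenvalues $\lm_\e^\pm$ obeying \eqref{PairAsympt}, and with the one-to-one correspondence of Theorem \ref{thm: Convergence}, these eigenvalues must be exactly $\lm_\e^- = \lambda_j^\e$ and $\lm_\e^+ = \lambda_{j+1}^\e$; running the argument with a quasimode of order $n+3$ and then truncating to order $n$, as in that proposition, yields the eigenvalue estimate \eqref{LpmEst}.

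For the eigenfunction estimate \eqref{UpmEst} I would invoke the quantitative half of Lemma \ref{LemmaVishik}. The decisive point is the spectral gap: since $\Lmen^+ - \Lmen^- \ge \omega\e^{3/2}$, and in fact $|\lm_\e^+ - \lm_\e^-| \ge c\e^{3/2}$, for small $\e$ each of $\lm_\e^\pm$ is the \emph{unique} eigenvalue of $A_\e$ in a segment of half-width $d\e^{3/2}$ centred at $\Lmen^\pm$. Applying Lemma \ref{LemmaVishik} on each such segment gives a bound $\|\,\|u_\e^\pm\|_\e^{-1}u_\e^\pm - \|\Ven^\pm\|_\e^{-1}\Ven^\pm\|_\e \le 2 d^{-1} c_n \e^{(n-3)/2}$, so that choosing $n$ large enough (and relabelling the order) makes the right-hand side as small as required. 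From here the passage to $H^1(a,b)$ is identical to the corresponding step of Theorem \ref{ThSimple1}: one uses the variational identities $\langle u_\e^\pm, \psi\rangle_\e = \lm_\e^\pm (u_\e^\pm, \psi)_\e$ and $\langle \Ven^\pm, \psi\rangle_\e = \Lmen^\pm(\Ven^\pm, \psi)_\e + \alpha_\e(\psi)$, where the residual $\alpha_\e$ is controlled in terms of $\pre{\psi}$, together with the norm equivalences \eqref{NormsEqui}, to bound $\pre{u_\e^\pm - \vartheta_\e^\pm \Ven^\pm}$ and hence recover \eqref{UpmEst}. The normalizing multipliers $\vartheta_\e^\pm$ converge to strictly positive limits by Theorem \ref{thm: Convergence}.

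It remains to read off the convergence statements, and these are already contained in Theorem \ref{TwoPlanes}. With $\lm_\e^- = \lambda_j^\e$, $\lm_\e^+ = \lambda_{j+1}^\e$ and $u_\e^- = u_{\e,j}$, $u_\e^+ = u_{\e,j+1}$, the basis vector $f_\e = \frac12(u_{\e,j+1}+u_{\e,j})$ converges in $L_2$ to $U$, so both $u_{\e,j}$ and $u_{\e,j+1}$ converge to the same eigenfunction $U$; and $g_\e = \frac{1}{2\omega\sqrt\e}(u_{\e,j+1}-u_{\e,j})$ converges to a nonzero multiple of the adjoined vector $U_*$. Multiplying by $2\omega$ shows that $\frac{1}{\sqrt\e}(u_{\e,j+1}-u_{\e,j}) \to \gamma U_*$ with $\gamma\neq 0$, which is the remaining assertion.

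The main obstacle I anticipate is precisely the near-degeneracy of the spectrum: because the gap between $\lm_\e^-$ and $\lm_\e^+$ collapses at the rate $\e^{3/2}$, the quantitative Vishik--Lyusternik bound loses a factor $\e^{-3/2}$, so the whole estimate must be driven by quasimodes of correspondingly higher order. Keeping this bookkeeping consistent across the two branches, and verifying that the isolating segments of half-width $d\e^{3/2}$ genuinely separate $\lm_\e^-$ from $\lm_\e^+$ for all small $\e$, is the delicate part; once the gap lower bound $|\lm_\e^+-\lm_\e^-|\ge c\e^{3/2}$ is secured, the remainder is a routine adaptation of Theorem \ref{ThSimple1}.
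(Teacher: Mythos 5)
Your proposal follows essentially the same route as the paper's proof: quasimodes from Proposition \ref{PropQuiasimodesDouble} plus the Vishik--Lyusternik Lemma on isolating segments whose width shrinks with the $\e^{3/2}$ gap, higher-order truncation to absorb the resulting loss of powers (the paper uses half-width $d\e^{2}$ and relabels $n\mapsto n+5$, you use $d\e^{3/2}$ and relabel accordingly --- an immaterial bookkeeping difference), the same $H^1$ conversion as in Theorem \ref{ThSimple1}, and the convergence statements read off from Theorem \ref{TwoPlanes} with the identification $\lm_\e^\pm=\lambda_{j}^\e,\lambda_{j+1}^\e$ via Theorem \ref{thm: Convergence}. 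This matches the paper's argument in both structure and substance, and your power counting is consistent.
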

\begin{proof}
It remains to prove estimates \eqref{UpmEst}. From \eqref{LpmEst} and Theorem \ref{thm: Convergence}
it may be concluded that for certain $d>0$ and $n\geq 2$ interval $[\Lmen^--d\e^{2},\Lmen^-+d\e^{2}]$ contains
eigenvalue $\lambda_j^\e$ only. In view of Prop. \ref{PropQuiasimodesDouble}  and
the Vishik-Lyusternik Lemma, we have
$$
\biggl\| u_{\e,j}- \frac{\|u_{\e,j}\|_\e}{\|V_{\e, n}^-\|_\e} V_{\e, n}^-\biggr\|_\e
\leq \frac{2c_n}{d\e^2}\|u_\e\|_\e \e^{n/2}\leq C_n \e^\frac{n-5}{2}.
$$
As in the proof of Theorem \ref{ThSimple1} we can obtain
$\| u_{\e,j}-\vartheta_\e^- U_{\e, n}^-\|_{H^1(a,b)}\leq C_n \e^{\frac{n-4}{2}}$.
Since all the coefficients of sum $U_{\e, n}^-$ are bounded in $H^1(a,b)$,
the first estimate \eqref{UpmEst} follows from the last inequality with $n$ replaced by $n+5$.
The same proof works  for $u_{\e,j+1}$.
\end{proof}

\bibliographystyle{plain}

\begin{thebibliography}{1}

\bibitem{SanchezHubertSanchezPalencia}
Sanchez Hubert J., Sanchez Palencia E.
\emph{Vibration and coupling of continuous systems. Asymptotic methods}
Berlin etc.: Springer-Verlag. xv, 421 pp., 1989.


\bibitem{Titchmarsh}
{Titchmarsh}, E.~C.
\emph{Eigenfunction expansions associated with second-order differential equations}
Oxford: Clarendon Press, 1946.

\bibitem{GohbergKrein}
Gohberg~I., Krein M.
\emph{Introduction to the Theory of Linear Nonselfadjoint Operators in Hilbert Space}
American Mathematical
Society, 1969.

\bibitem{VishykLiusternyk}
Vishik M.I., Liusternik L.A.
Regular degeneration and
boundary layer for linear differential equations with a small
parameter
\emph{Usp. Mat. Nauk}, Vol 12. 5(77), 1957. 3--122.

\bibitem{Lazutkin}
Lazutkin V. F.
Semiclassical asymptotics of eigenfunctions
\emph{Partial differential equations, V, 133--171,  Encyclopaedia Math. Sci.} 34, Springer, Berlin, 1999.


\bibitem{Panasenko1987}
Panasenko G. P.
Asymptotic behavior of the eigenvalues of elliptic equations
              with strongly varying coefficients
\emph{Trudy Sem. Petrovsk.} 1987(12), 201--217. 

\bibitem{BabychGolovaty2000}
Babych N., Golovaty Yu.
Complete WKB asymptotics of high frequency vibrations in a stiff problem
\emph{Matematychni studii} \textbf{14}(1): 59 - 72, 2001.

\bibitem{Babych2000}
Babych N.
Decomposition of low frequency eigenfunctions in stiff problem
\emph{Visn. L'viv. Univ., Ser. Mekh.-Mat.}
\textbf{58}: 97-108, 2000.

\bibitem{BabychGolovaty1999}
Yu. Golovaty and N. Babych,
On WKB asymptotic expansions of high frequency vibrations in stiff problems
\emph{Int. Conf. on Diff. Equations. Equadiff '99, Berlin, Germany.
Proc. of the conference.}
Singapore: World Scientific. Vol. 1: 103-105, 2000.


\bibitem{LoboPerez1997}
Lobo M., P{\'e}rez M. E.
High frequency vibrations in a stiff problem
\emph{Math. Methods. Appl. Sci.} 1997. Vol 7, no 2, 291-311.


\bibitem{LoboNazarovPerez2003rus}
Lobo M., Nazarov S., Peres M.
Natural oscillations of a strongly inhomogeneous elastic body.
Asymptotics of and uniform estimates for remainders
\emph{Dokl. Akad. Nauk} 389(2003)2, 173--176.



\bibitem{Sanchez-Palencia1980}
Sanchez-Palencia E.
\emph{Nonhomogeneous media and vibration theory}
Lecture Notes in Physics 127.
Springer-Verlag, Berlin, 1980, 398 pp.



\bibitem{BabychPreprynt}
Babych N.
\emph{Vibrating system contaning a part with small kinetic energy}
Preprint NASU. Centre of Math. Modelling,
Pidstryhach Institute for 
APMM;
01-2001. Lviv, 2001. 48 pp.



\bibitem{GolCR}
Golovaty Yu. D., G{\'o}mez D., Lobo M. and P{\'e}rez E.
\emph{Asymptotics for the eigenelements
of vibrating membranes with very heavy thin inclusions} C. R. Mecanique 330 (2002)
777-782.

\bibitem{GolMAS}
Golovaty Yu. D., G{\'o}mez D., Lobo M. and P{\'e}rez E.
On vibrating Membranes with very heavy thin inclusions
\emph{Math. Models Methods Appl. Sci.} Vol. 14,  no.7 (2004), 987--1034.


\bibitem{MelnykNazarov1}
Melnyk T., Nazarov. S.
The asymptotic structure of the spectrum in the problem of harmonic oscillations of a hub with heavy
spokes
{\em Dokl. Akad. Nauk of Russia}, {\bf 333} (1993) No.1, pp.13-15
(in Russian); and english translation in {\em Russian Acad. Sci. Dokl. Math.}, v.48,
1994, No.3, pp.428-432.

\bibitem{MelnykNazarov2}
Melnyk T., Nazarov. S.
Asymptotic analysis of the Neumann  problem in a junction of body and heavy spokes.
{\em Algebra i Analiz}, {\bf 12} Vol. 2(2000) pp. 188-238; and English translation in {\em
St.Petersburg Math.J.}, Vol.12, No.2 (2001), pp. 317-351.

\bibitem{Chechkin2006}
Chechkin G. A.
Homogenization of a model spectral problem for the Laplace operator
in a domain with many closely located "heavy" and
"intermediate heavy" concentrated masses.
Problems in mathematical analysis.
No. 32. \emph{J. Math. Sci.} (N. Y.) 135 (2006), no. 6, 3485--3521.

\bibitem{Perez2003}
Perez E.
On the whispering gallery modes on interfaces of membranes
composed of two materials with very different densities
\emph{Math. Models Meth. Appl. Sci.} 13 (2003), no 1, 75--98.

\bibitem{Perez2005}
Perez E.
Spectral convergence for vibrating systems containing
a part with negligible mass
\emph{Math. Methods Appl. Sci.} 28 (2005), no. 10, 1173--1200.



\bibitem{GomezLoboNazarovPerez1}
Gomez D., Lobo M., Nazarov S., Perez E.
Spectral stiff problems in domains surrounded by thin bands:
asymptotic and uniform estimates for eigenvalues
\emph{J. Math. Pures Appl.} (9):85, 2006. no. 4, 598--632.

\bibitem{GomezLoboNazarovPerez2}
Gomez D., Lobo M., Nazarov S., Perez E.
Asymptotics for the spectrum of the Wentzell problem with a small parameter
and other related stiff problems
\emph{J. Math. Pures Appl.} (9) 86 (2006), no. 5, 369--402.










\end{thebibliography}

\end{document}